\def \R{{\mathbb R}}
\def \1{{\mathbb 1}}
\theoremstyle{plain}
\newtheorem{theorem}{Theorem}
\newtheorem{proposition}{Proposition}
\newtheorem{definition}{Definition}
\newtheorem{lemma}{Lemma} 
\newtheorem{corollary}{Corollary}
\theoremstyle{remark}
\newtheorem{remark}{Remark}
\newtheorem{Exemp}{Example}
\begin{document}
\title[The Lagrange Multipliers]{Lagrange Multipliers in  locally convex spaces.}
\author{Mohammed Bachir, Joel Blot}
\date{\today}
\address{Laboratoire SAMM 4543, Universit\'e Paris 1 Panth\'eon-Sorbonne, France}

\email{Mohammed.Bachir@univ-paris1.fr}

\email{Joel.Blot@univ-paris1.fr}

\maketitle

\begin{abstract}
We give a general Lagrange multiplier rule for mathematical programming problems in a Hausdorff  locally convex space. We consider infinitely many inequality and equality constraints. Our results gives in particular a generalisation of  the result of J. Jahn in \cite{Ja}, replacing Fr\'echet-differentiability assumptions on the functions by the Gateaux-differentiability. Moreover, the closed convex cone with a nonempty interior  in the constraints is replaced by a strictly  general class of closed subsets introduced in the paper and called {\it ``admissible sets"}. Examples illustrating our results are given.
\end{abstract}

{\bf Keywords:} Lagrange multipliers, Optimization problems, Admissible sets, Equi-Gateaux-differentiability.

{\bf 2020 Mathematics Subject Classiﬁcation:} \subjclass{Primary 46N10, 49J50, Secondary 46G05} 

\section{Introduction.}
Let $E$ be an Hausdorff locally convex topological vector  space (in short l.c.t.v space, the term ``Hausdorff'' will be implicit) and $\Omega\subset E$ be an open subset. Let $f: \Omega \to \R$ be a function. The aim of this paper consists in giving  a necessary condition, using Lagrange multipliers, for a point $\hat{x}\in \Omega$ to be a solution of the following optimization problem with (finite or infinite) inequality constraints
\begin{equation*}
(\mathcal{P})
\left \{
\begin{array}
[c]{l}
\max f\\
x\in \Omega \\
x\in A:=[C]^\times:=\lbrace x\in E: \phi(x)\geq 0, \hspace{1mm} \forall \phi \in C\rbrace
\end{array}
\right. 
\end{equation*}
where $C$ is a  set of  functions $\phi: E \to \R$. We prove in our first main results (Theorem \ref{FarkasMM} and Proposition \ref{FarkasM})  that a natural condition to obtain  non-trivial Lagrange multipliers for the problem $(\mathcal{P})$ in a genaral  l.c.t.v space $E$  with finite or infinite  inequality constraints, is that:


$\bullet$ The function $f$ is Gateaux differentiable at the optimal solution $\hat{x}$ and the family $C$ is equi-Gateaux differentiable at $\hat{x}$ (i.e. Gateaux differentiable at $\hat{x}$ with a same modulus) and the set $\lbrace \phi \in C: \phi(\hat{x})\neq 0 \rbrace$ is equi-lower semicontinuous at $\hat{x}$ (see, the definitions in Subsection \ref{Equi-G}). We also assume  that $\inf_{\phi\in C}\phi(\hat{x})=0$, otherwize $\hat{x}$ belongs to the interior of $A$ and so there is non constraints.

$\bullet \bullet$ The weak-star closed convex hull  $\overline{\textnormal{conv}}^{w^*}\lbrace d_G \phi (\hat{x}): \phi \in C \rbrace$ in the topological dual $E^*$, is $w^*$-compact (where, $d_G \phi (\hat{x})$ denotes the Gateaux-differential of a function $\phi$ at $\hat{x}$). 


A set $A=[C]^\times$ where $C$ satisfies the above conditions, will be said {\it weak-admissible} at $\hat{x}$ (Definition \ref{def1}). 

Under these natural conditions, we obtain  non-trivial Lagrange multipliers as follows: there exists  $(\lambda^*, \beta^*)\in \R^+\times \R^+$ such that $(\lambda^*,\beta^*)\neq (0,0)$ and 
$
\lambda^* d_G f  (\hat{x}) \in -\beta^* \mathcal{T}_{C}(\hat{x}),
$
(``condition of Fritz John'') where, $\mathcal{T}_{C}(\hat{x}):=\cap_{n\geq 1} \overline{\textnormal{conv}}^{w^*}\lbrace d_G\phi(\hat{x}): \phi(\hat{x}) \in [0, \frac{1}{n}], \phi\in C\rbrace \subset E^*$ .
If moreover we assume that $0 \not \in \mathcal{T}_{C}(\hat{x})$, then we can assume that $\lambda^*=1$ (``condition of Karush-Kuhn-Tucker''). 

The set $\mathcal{T}_{C}(\hat{x})$ is small enough to encompass known results, such as when the set of constraints is finite or when $A=[C]^\times$ is a closed convex set (see Example ~\ref{ExN2} and Proposition ~\ref{nonzero}). In addition, the set $\mathcal{T}_{C}(\hat{x})$ defined above is optimal in the sens that it cannot be replaced in general (when $C$ is infinite) by the set $\overline{\textnormal{conv}}^{w^*}\lbrace d_G\phi(\hat{x}): \phi(\hat{x})=0, \phi\in C\rbrace$ (a simple example even in $\R^2$ is given in Example \ref{ExN1}). Examples of more explicit sets containing $\mathcal{T}_{C}(\hat{x})$ are given in Example \ref{ExN2} and Example \ref{ExN2bis}.


The above mentioned result extends  to  infinitely many inequality constraints  in l.c.t.v spaces under the Gateaux-differentiability assumption, results established for finitely many inequality constraints in finite dimension (see for instance the works of  P. Michel in \cite{Mp}, J. Blot in \cite{Bl} and H. Yilmaz in \cite{Yi}). Our work also generalises the so called semi-infinite programming (SIP) problems (see Example \ref{ExN2bis} and Corollary \ref{semi-finite}).
\vskip5mm
On the other hand, we prove in our second main results (Theorem \ref{Farkas-bis} and Corollary \ref{cor-Farkas-bis}) a Lagrange multiplier rule  in Banach spaces for mathematical programming with both infinitely many inequality and equality constraints of the form 
\begin{equation*}
(\widetilde{\mathcal{P}})
\left \{
\begin{array}
[c]{l}
\max f\\
x\in \Omega \\
g(x) \in A\\
h(x)=0
\end{array}
\right. 
\end{equation*}
where, $E$ and $W$ are Banach spaces, $\Omega$ an open subset of $E$, $Y$ is a normed  space, $A\subset Y$ and   $g: \Omega \to Y$, $h: \Omega \to W$ and $f: \Omega \to \R$  are  mappings.

There are several works in the literature addressing this subject with different conditions (see for instance \cite{Bl1, Ja, JS, JT1,JT2, Ku}. For the convex and affine frame, we refer to \cite{Dmb}). The classical conditions given by J. Jahn in \cite[Theorem 5.3 ]{Ja} assume that the functions $f$, $g$ and $h$ are Fr\'echet differentiable  and that the set $A$ is a closed convex cone with a nonempty interior.  In Theorem \ref{Farkas-bis} and Corollary \ref{cor-Farkas-bis}, we generalize  the result  given by J. Jahn \cite{Ja} in the following directions:

$\bullet$  The objective function $f$ and the function $g$  in the constraint are assumed to be  G\^ateaux  differentiable at the optimal solution not necessarilly Fr\'echet differentiable at this point. 

$\bullet \bullet$ We extend  the assumption that $A$ is a closed convex cone with nonempty interior in \cite[Theorem 5.3 ]{Ja} to a more general class of closed subsets which are not necessarily neither cone nor even convex but includes the class of all closed convex subset $A$  whose recession cone $\mathcal{R}_A$ (see Section \ref{prel}) has a nonempty interior, it includes in particular closed  convex cones with nonempty interiors (see Corollary \ref{cor-Farkas-bis}). This class of sets will be introduced in Section \ref{Adm}, which we will call the class of {\it "admissible sets"} (Definition \ref{def2}).

\vskip5mm

In infinite dimension, most of the authors assumed that $A$ is a closed convex cone with nonempty interior. The first result which gives conditions in the case of closed sets is due to Jourani and Thibault \cite{JT1,JT2, Jourani}, dealing with the approximate subdifferential (see \cite{Ioffe1, Ioffe2}).  Our conditions are on the one hand  different from those given in \cite{JT1,JT2, Jourani} and on the other hand cannot be deduced from the theory of approximate subdifferential because, just like the Clarke's subdifferential, the approximate subdifferential of a Gateaux differentiable function at some point does not generally coincide with the Gateaux-differential of the function at this point. 

\vskip5mm

This paper is organized as follows. In Section \ref{prel}, we recall some classical notions  around convex cones and equi-differentiability and we give some examples. In Section \ref{Adm}, we introduce the notion of {\it admissible sets}, we give non-convex examples of such sets and prove that a closed convex cones with a nonempty interior is a particular {\it admissible set}.  In Section \ref{Main-results}, we give our main results and some corollaries, propositions and examples as consequences.

\section{Preliminaries.} \label{prel}
Because we will need certain notions later, we  recall in this section some classical notions around convex cones, their duals, the barrier cone, recession cone, etc. On the other hand, we will recall the notion of Gateaux-differentiability, equi-Gateaux-differentiability, equi-lower semicontinuity,  etc. 

\subsection{The dual convex cone in a Hausdorff locally convex space.}
Let $Y$ be an Hausdorff locally convex space and $Y^*$ its topological dual. By  $\textnormal{int}(A)$, we denote the interior of a subet $A$ of $Y$.  By $\overline{\textnormal{conv}}^{w^*}(B)$ we denote the $w^*$-closed convex hull of a set $B\subset Y^*$. By $A^*$ we denote the dual positive cone of $A\subset Y$, defined by 
$$A^*:=\lbrace y^*\in Y^*: y^*(y)\geq 0; \hspace{1mm} \forall y\in A\rbrace.$$
The negative polar cone of $A$ is denoted $A^\circ:=-A^*$. We define the bidual cone of a set $A$ by 
$$A^{**}:=\lbrace y\in Y: y^*(y)\geq 0; \hspace{1mm} \forall y^*\in A^*\rbrace.$$
Recall that we have $A \subset A^{**}=\overline{\textnormal{cone}}(A)$ (the closed conique hull of $A$) and that $A^*=\overline{\textnormal{cone}}(A)^*$.

Let $K$ be a closed convex subset of $Y$, the barrier cone of $K$ is the closed convex cone defined as follows
$$\textnormal{bar}(K):=\lbrace y^* \in Y^*: \sup_{y\in K} y^*(y)<+\infty \rbrace.$$
 We denote $\mathcal{R}_K$  for the recession cone of the closed convex set $K$, that is, 
$$\mathcal{R}_K=\lbrace v\in Y: \forall \lambda >0, \forall x\in K, x+\lambda v\in K\rbrace.$$
It is well known (see for instance \cite{Za}, Ex. 2.45) that the $w^*$-closure of the barrier cone of $K$ is the polar of the recession cone of $K$, 
\begin{eqnarray}\label{theformulas}
\overline{\textnormal{bar}(K)}^{w^*}=(\mathcal{R}_K)^{\circ}=-(\mathcal{R}_K)^*.
\end{eqnarray}
Notice that $\textnormal{bar}(K)=-K^*$ and $\mathcal{R}_K=K$, if $K$ is a closed convex cone. We recall the following known consequence of the Hahn-Banach theorem.
\begin{proposition} \label{barrier} Let $K$ be a closed convex subset of $Y$. Then, we have 
\begin{eqnarray*}
 K&=&\cap_{y^*\in \textnormal{bar}(K)} \lbrace y \in Y : y^*(y)\leq \sup_{z\in K} y^*(z) \rbrace.
\end{eqnarray*}
In particular, $K=Y$ if and only if $\textnormal{bar}(K)=\lbrace 0 \rbrace$.
\end{proposition}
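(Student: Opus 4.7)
The plan is to prove the two inclusions separately and then deduce the second assertion.

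Denote the right-hand side by $L$, i.e. $L := \bigcap_{y^*\in \textnormal{bar}(K)} \{y \in Y : y^*(y) \leq \sup_{z\in K} y^*(z)\}$. The inclusion $K \subset L$ is immediate: if $y \in K$, then for any $y^* \in Y^*$ we trivially have $y^*(y) \leq \sup_{z \in K} y^*(z)$, so the defining inequality holds for every $y^* \in \textnormal{bar}(K)$ (and in fact for every $y^* \in Y^*$).

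For the reverse inclusion $L \subset K$, I would argue by contrapositive and use the Hahn-Banach separation theorem. Suppose $y_0 \notin K$. Since $K$ is a closed convex subset of the Hausdorff locally convex space $Y$ and $\{y_0\}$ is a compact convex set disjoint from $K$, the geometric Hahn-Banach theorem (in its strict separation form for closed convex sets versus a point) provides $y^* \in Y^*$ and $\alpha \in \R$ such that
\begin{equation*}
y^*(z) < \alpha < y^*(y_0) \quad \text{for all } z \in K.
\end{equation*}
Taking the supremum over $z \in K$ gives $\sup_{z \in K} y^*(z) \leq \alpha < +\infty$, so $y^* \in \textnormal{bar}(K)$. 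But the strict inequality $y^*(y_0) > \alpha \geq \sup_{z \in K} y^*(z)$ shows that $y_0$ fails the defining inequality for this particular $y^* \in \textnormal{bar}(K)$, hence $y_0 \notin L$. This gives $L \subset K$ and establishes the equality.

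For the final assertion: if $\textnormal{bar}(K) = \{0\}$, then the intersection defining $L$ ranges only over $y^* = 0$, for which the defining inequality $0 \leq \sup_{z \in K} 0$ is satisfied by every $y \in Y$, so $K = L = Y$. Conversely, if $K = Y$ then for any nonzero $y^* \in Y^*$ the image $y^*(Y) = \R$ is unbounded above, so $\sup_{z \in K} y^*(z) = +\infty$ and $y^* \notin \textnormal{bar}(K)$; hence $\textnormal{bar}(K) = \{0\}$. The main (and only) technical step is invoking the correct form of Hahn-Banach; everything else is bookkeeping.
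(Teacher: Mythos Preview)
Your proof is correct and follows essentially the same route as the paper: both establish $K\subset L$ trivially and prove $L\subset K$ by contraposition via Hahn--Banach separation of a point from a closed convex set. Your write-up is in fact slightly more complete, since you spell out the ``in particular'' assertion about $\textnormal{bar}(K)=\{0\}$, which the paper leaves implicit.
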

\begin{proof} Set $L:=\cap_{y^*\in \textnormal{bar}(K)} \lbrace y \in Y : y^*(y)\leq \sup_{z\in K} y^*(z) \rbrace$ and let us prove that $K=L$. Clearly, we have that $ K\subset L$. Suppose that $y_0\not \in K$. By the Hahn-Banach theorem, there exists $y^*_0\in Y^*\setminus \lbrace 0 \rbrace$ and $r\in \R$ such that 
\begin{eqnarray*}
y^*_0 (y_0)\geq r > \sup \lbrace y^*_0(y): y\in K \rbrace.
\end{eqnarray*}
It follows that  $y^*_0\in \textnormal{bar}(K)$ and $y_0\not \in L$. Thus, $L\subset K$.
\end{proof}

\subsection{Equi-Gateaux-differentiability and equi-semicontinuity.} \label{Equi-G} Let $E$ be an l.c.t.v. space, $\Omega$ be a nonempty open subset of $E$ and $(Y,\|\cdot\|)$ be a normed space.  Let $g : E \to Y$ be a function. We say that $g$ is Gateaux differentiable at $x\in \Omega$ if there exists a  linear and continuous map  $d_G g (x):E\to Y$ called the Gateaux-differential of $g$ at $x\in \Omega$ satisfying: $\forall v\in E$
$$\lim _{t \searrow 0} \left \|\frac {g(x+t v )-g(x)-td_G g (x)(v)}{t} \right \|=0.$$
If $E$ is a normed space, we denote $B_E(x,r)$ the closed ball centered at $x$ with radius $r>0$. We say that $g$ is Fr\'echet differentiable at $x\in \Omega$ if there exists a linear and continuous map  $g' (x):E\to Y$ called the Fr\'echet-differential of $g$ at $x\in \Omega$ (denoted also by $d_F g(x)$) such that: 
$$\lim _{t \searrow 0}\sup_{v\in B_E(0,1)} \left \|\frac {g(x+t v )-g(x)-tg' (x)(v)}{t} \right \|=0.$$
By $\langle\cdot, \cdot \rangle$ we denote the duality pairing between $E^*$ and $E$. A family $C$ of  functions from $E$ into $\R$ is  said to be equi-Gateaux differentiable (in short, equi-G-differentiable) at a point $x\in E$ if for every $\phi \in C$, $\phi : E \to \R$ is Gateaux differentiable at  $x$ and  for every $v \in E$ 
\begin{eqnarray*}
\lim_{t \searrow 0}\sup_{\phi \in C} \left |\frac{\phi(x+ t v) - \phi(x)- t \langle d_G \phi(x), v \rangle}{t} \right |=0.
\end{eqnarray*}
If $E$ is a normed space, we say that $C$ is equi-Fr\'echet differentiable (in short, equi-F-differentiable) at $x\in E$ if 
\begin{eqnarray*}
\lim_{t \searrow 0}\sup_{v\in B_E(0,1)}\sup_{\phi\in C} \left | \frac{\phi(x+tv)-\phi(x)-t\langle \phi'(x), v \rangle}{t}\right | =0,
\end{eqnarray*}
where $\phi'(x):=d_F \phi(x)$ denotes the Fr\'echet-differential of $\phi$ at $x$.

We say that  $C$ is equi-lower semicontinuous (in short equi-lsc) at $x\in E$, if for every $\varepsilon>0$ there exists an open  neighbourhood $\mathcal{O}_{x,\varepsilon}$ of $x$ (depending only on $x$ and $\varepsilon$) such that $\phi(y)-\phi(x)>-\varepsilon$, for all $y\in \mathcal{O}_{x,\varepsilon}$ and for all $\phi \in C$. If $E$ is a normed space, we say that $C$ is $r$-equi-Lipschitz at $x$ if there exists a ball centered at $x$ on which every function from $C$ is $r$-Lipschitz ($r\geq0$).

\begin{Exemp} \label{Exemp1} A basic and elementary examples are the following:

$(i)$ A finite family of Gateaux differentiable (resp. lower semicontinuous) functions from $E$ into $\R$ at some point, is equi-G-differentiable (resp. equi-lsc) at this point.

$(ii)$ Every nonempty subset $C$ of $E^*$ is equi-G-differentiable at every point. The set of $1$-Lipschitz maps in a normed space is a classical example of uniformly equi-continuous, hence equi-lsc functions at every point.

\end{Exemp}
We give in the following proposition some general examples of everywhere equi-F-differentiable functions. Let $X$ be a Banach space.  We denote $C^{1,\alpha}_b(X)$ the space of all real-valued, bounded, Lipschitz and continuously Fr\'echet differentiable functions $f$ on $X$  such that the Fr\'echet-differential $f'(\cdot):=d_F f(\cdot)$ is $\alpha$-Holder ($0<\alpha \leq 1$), that is, the space of all  continuously Fr\'echet differentiable functions $f$  such that $$\|f\|_{1,\alpha}:=\max(\|f\|_{\infty}, \|f\|_L, \|f\|_\alpha) <+\infty,$$ for all $f\in C^{1,\alpha}_b(X)$, where $\|f\|_{\infty}$ denotes the sup-norm of $f$ and
\begin{eqnarray*}
 \|f\|_L:=\sup \lbrace \frac{\|f(x_1)-f(x_2)\|}{\|x_1-x_2\|}: x_1,x_2\in X; x_1\neq x_2\rbrace.
\end{eqnarray*}
\begin{eqnarray*}
 \|f\|_\alpha:=\sup \lbrace \frac{\|f'(x_1)-f'(x_2)\|}{\|x_1-x_2\|^{\alpha}}: x_1,x_2\in X; x_1\neq x_2\rbrace.
\end{eqnarray*}
The space $(C^{1,\alpha}_b(X), \|\cdot\|_{1,\alpha})$ is a Banach space. We also need to introduce the space $C_b^G(Y)$ of all bounded, Lipschitz, G\^ateaux-differentiable functions $f$ from $X$ into $\R$ equipped with the norm
$\|f\|_G=\max(\|f\|_{\infty},\|d_G f\|_{\infty})$. Recall that by the mean value theorem, we have for every $f\in C_b^G(X)$ that
\[\|d_G f\|_{\infty}=\sup_{x,\overline{x}\in X; x\neq \overline{x}}\frac{|f(x)-f(\overline{x})|}{\|x-\overline{x}\|}(=:\|f\|_{L}).\]
The space $C_b^G(X)$, endowed with the mentioned norm, is a Banach space (see, \cite{DGZ}).

\begin{proposition} \label{EFrech} Let $X$ be a Banach space. Every nonempty bounded  subset $C \subset C^{1,\alpha}_b(X)$ is (everywhere) equi-F-differentiable and $r$-Lipschitz form some $r\geq 0$. Moreover, for all $x\in X$, the set $\overline{\textnormal{conv}}^{w^*}\lbrace f'(x): f\in C\rbrace$ is a $w^*$-compact subset of $X^*$.
\end{proposition}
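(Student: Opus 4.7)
The plan is to unpack the bound $\|f\|_{1,\alpha}\le M$ for all $f\in C$ (which is what boundedness of $C$ in $C^{1,\alpha}_b(X)$ means), and then derive the three conclusions by straightforward estimates. First, since $\|f\|_L\le M$ for every $f\in C$, every $f\in C$ is $M$-Lipschitz on all of $X$, hence $C$ is $M$-equi-Lipschitz at every point, giving the $r$-Lipschitz statement with $r=M$.

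For equi-F-differentiability at a fixed $x\in X$, I would use the standard fundamental-theorem-of-calculus trick along the line segment from $x$ to $x+tv$. Namely, setting $\varphi(s):=f(x+sv)$, we have $\varphi'(s)=\langle f'(x+sv),v\rangle$, so
\begin{equation*}
f(x+tv)-f(x)-t\langle f'(x),v\rangle=\int_0^t\langle f'(x+sv)-f'(x),v\rangle\,ds.
\end{equation*}
The H\"older bound $\|f'(x+sv)-f'(x)\|\le\|f\|_\alpha\,s^\alpha\|v\|^\alpha\le M s^\alpha\|v\|^\alpha$ then yields
\begin{equation*}
\bigl|f(x+tv)-f(x)-t\langle f'(x),v\rangle\bigr|\le M\|v\|^{\alpha+1}\frac{t^{\alpha+1}}{\alpha+1}.
\end{equation*}
Dividing by $t$ and taking the supremum over $v\in B_X(0,1)$ and $f\in C$ gives an upper bound of order $Mt^\alpha/(\alpha+1)$, which tends to $0$ as $t\searrow 0$. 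This is exactly the equi-F-differentiability condition.

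For the last assertion, observe that $\|f'(x)\|_{X^*}\le\|f\|_L\le M$ for every $f\in C$, so the set $\{f'(x):f\in C\}$ is contained in the closed ball $M\overline{B}_{X^*}$. By the Banach--Alaoglu theorem this ball is $w^*$-compact (and convex), so its $w^*$-closed convex hull contains $\overline{\textnormal{conv}}^{w^*}\{f'(x):f\in C\}$; being $w^*$-closed inside a $w^*$-compact set, the latter is itself $w^*$-compact.

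There is essentially no obstacle here: the argument is routine once one has written down the integral identity. The only step deserving a moment of care is justifying that $s\mapsto f(x+sv)$ is $C^1$ on $[0,t]$ with $\varphi'(s)=\langle f'(x+sv),v\rangle$, which is standard for $f\in C^{1,\alpha}_b(X)\subset C^1(X)$, and checking that all constants appearing in the estimate depend only on $M=\sup_{f\in C}\|f\|_{1,\alpha}$ and not on the particular $f\in C$, which is what yields the uniformity over $C$.
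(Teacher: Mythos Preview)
Your proof is correct and follows essentially the same line as the paper's: bound the first-order Taylor remainder using the uniform $\alpha$-H\"older control on $f'$, take suprema over $f\in C$ and $v\in B_X(0,1)$, and invoke Banach--Alaoglu for the $w^*$-compactness. The only cosmetic difference is that the paper uses the (real-valued) mean value theorem applied to $\xi\mapsto f(\xi)-\langle f'(x_0),\xi\rangle$ instead of your integral representation, yielding the bound $r\|tv\|^{1+\alpha}$ rather than your sharper $M\|v\|^{1+\alpha}t^{1+\alpha}/(\alpha+1)$; either way the conclusion is immediate.
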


\begin{proof} Let $r>0$ be such that $\|f\|_{1,\alpha} \leq  r$, for all $f\in C$. Clearly, $C$ is $r$-Lipschitz. Let $x_0, x\in X$, $t>0$  and $f\in C$. By the mean value theorem applied to the function $\xi \mapsto f(\xi)-\langle f'(x_0), \xi  \rangle$ on the intervalle $[x_0, x_0+ tx]$, there exists $\theta \in (0,1)$ such that 
\begin{eqnarray*}
f(x_0+tx)-f(x_0)-t\langle f'(x_0), x \rangle=\langle f'(x_0+\theta tx)-f'(x_0),tx \rangle.
\end{eqnarray*}
It follows that
\begin{eqnarray*}
|f(x_0+tx)-f(x_0)- t \langle f'(x_0), x \rangle| &\leq& \| f'(x_0+\theta tx)-f'(x_0)\|\|tx\|\\
&\leq& \|f\|_{\alpha}\|tx\|^{1+\alpha}\theta^\alpha\\
&\leq& r\|tx\|^{1+\alpha}.
\end{eqnarray*}
Then, we have 
\begin{eqnarray*}
\lim_{t \searrow 0}\sup_{x\in B_X(0,1)}\sup_{f\in C} \frac{|f(x_0+tx)-f(x_0)-t\langle f'(x_0), x \rangle|}{t} &\leq& \lim_{t \searrow 0} rt^\alpha =0,
\end{eqnarray*}
that is, $C$ is equi-F-differentiable at $x_0$. On the other hand, clearly we have $\overline{\textnormal{conv}}^{w^*}\lbrace f'(x): f\in C\rbrace \subset B_{X^*}(0,r)$, so it  is $w^*$-compact.
\end{proof}

\section{Admissible sets and the set of multipliers.} \label{Adm}

In this section we introduce the notions of {\it weak-admissible} and {\it admissible} sets which will allow us to generalize closed convex sets whose recession cones have nonempty interiors, this generalize in particular the class of closed convex cones with nonempty interiors.  Let $E$ be a l.c.t.v space. To each nonempty family $C$ of functions from $E$ into $\R$, we associated a subset of $E$, denoted $[C]^{\times}$,  as follows

$$[C]^{\times}:=\cap_{\phi\in C} \lbrace x\in E: \phi(x)\geq 0\rbrace.$$
Such sets are called in the literature $\Phi$-convex subsets of $E$ (A notion introduced by Ky Fan, see for instance \cite{KF1}). As an  immediate consequence of the Hahn-Banach theorem, we see that every closed convex subset of $E$ is a $\Phi$-convex subset, that is, every closed convex subset of $E$ is of the form $[C]^{\times}$ for some subset $C$ of $E^*+\R$.

\subsection{Weak-admissible and admissible sets.}

We give the following definitions related to the admissibility of a set at one of its points.

\begin{definition} \label{def1} Let $E$ be a l.c.t.v space. We say that $F\subset E$ is {\it weak-admissible} at $\hat{x}\in F$ if there exists  a nonempty family $C$ of real-valued functions $\phi: E \to \R$ such that:

$(a)$ $F=[C]^\times$.

$(b)$ $C $ is equi-G-differentiable at $\hat{x}$.

$(c)$ $S:=\lbrace \phi \in C: \phi(\hat{x})\neq 0 \rbrace$ is either empty or equi-lsc at $\hat{x}$.

$(d)$ the convex  set $\overline{\textnormal{conv}}^{w^*}\lbrace d_G \phi(\hat{x}): \phi\in C\rbrace $  is  $w^*$-compact  in $E^*$.

\noindent In this case, we say that $F$ is determined by $C$. We say that $F$ is weak-admissible if $F=[C]^\times$ is weak-admissible at each of its points.
\end{definition}

\begin{definition} \label{def2}
If $E$ is a normed space, we say that $F\subset E$ is {\it admissible} at $\hat{x}\in F$ if there exists a nonempty family $C$ of real-valued functions $\phi: E \to \R$ such that:

$(a)$ $F=[C]^\times$.

$(b)$ $C $ is equi-G-differentiable and $r$-equi-Lipschitz at $\hat{x}$.

$(c)$  $0 \not \in \overline{\textnormal{conv}}^{w^*}\lbrace d_G \phi(\hat{x}): \phi\in C\rbrace $ $($$\subset B_{E^*}(0,r)$$)$. 

\noindent We say that $F$ is an admissible set if $F=[C]^\times$ is admissible at each of its points. 
\end{definition}
Clearly, an admissible set is weak-admissible but the converse is not true in general.
The problem $\mathcal{P}$ is without constraints if the optimal solution $\hat{x}$ belongs to the interior of $F$. Thus, our work is of particular interest when $\hat{x}\in F\setminus \textnormal{int}(F)$. In this case we have the following proposition.

\begin{proposition} \label{lemmainf} Let $E$ be a l.c.t.v  space, $F\subset E$ be a  weak-admissible set at $\hat{x}\in F=[C]^\times$, determined by $C$. Suppose that  $\hat{x}\in F\setminus \textnormal{int}(F)$, then $\inf_{\xi \in C}\xi(\hat{x})= 0$. 
\end{proposition}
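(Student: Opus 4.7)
The plan is to proceed by contradiction, exploiting the equi-lower semicontinuity assumption $(c)$ to show that a strictly positive infimum would force $\hat{x}$ to sit in the interior of $F$.

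First, I would record the trivial lower bound. Since $\hat{x} \in F = [C]^\times$, we have $\phi(\hat{x}) \geq 0$ for every $\phi \in C$, hence $\inf_{\xi \in C} \xi(\hat{x}) \geq 0$. The content of the proposition is therefore the opposite inequality. Suppose for contradiction that $\alpha := \inf_{\xi \in C} \xi(\hat{x}) > 0$. Then for every $\phi \in C$ we have $\phi(\hat{x}) \geq \alpha > 0$, so in particular $\phi(\hat{x}) \neq 0$. Consequently the set $S = \{\phi \in C : \phi(\hat{x}) \neq 0\}$ coincides with $C$ and, being nonempty, must be equi-lsc at $\hat{x}$ by condition $(c)$ of Definition~\ref{def1}.

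Next, I would unpack the equi-lsc property with the choice $\varepsilon = \alpha/2 > 0$. This yields an open neighbourhood $\mathcal{O}_{\hat{x}, \alpha/2}$ of $\hat{x}$ such that
\[
\phi(y) - \phi(\hat{x}) > -\tfrac{\alpha}{2} \quad \text{for every } y \in \mathcal{O}_{\hat{x}, \alpha/2} \text{ and every } \phi \in C.
\]
Combining this with $\phi(\hat{x}) \geq \alpha$ gives $\phi(y) > \alpha/2 > 0$ for all such $y$ and $\phi$. Hence every point of $\mathcal{O}_{\hat{x}, \alpha/2}$ satisfies $\phi(\cdot) \geq 0$ simultaneously for all $\phi \in C$, i.e. $\mathcal{O}_{\hat{x}, \alpha/2} \subset [C]^\times = F$. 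This places $\hat{x}$ in the interior of $F$, contradicting the hypothesis $\hat{x} \in F \setminus \textnormal{int}(F)$. Therefore the assumption $\alpha > 0$ is untenable and $\inf_{\xi \in C}\xi(\hat{x}) = 0$.

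There is really no hard step here: the assertion is essentially a routine unwinding of the equi-lsc clause in the definition of weak-admissibility, and neither the Gateaux-differentiability condition $(b)$ nor the $w^*$-compactness condition $(d)$ is needed. The only mild subtlety is remembering that condition $(c)$ is stated disjunctively (``$S$ is either empty or equi-lsc''), which is why one first has to verify $S = C$ under the contradiction hypothesis before invoking the equi-lsc part.
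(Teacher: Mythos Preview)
Your proof is correct and follows essentially the same route as the paper's own argument: assume $\alpha:=\inf_{\xi\in C}\xi(\hat{x})>0$, observe that then $S=C$ so condition~$(c)$ gives equi-lower semicontinuity of the whole family, and pick $\varepsilon<\alpha$ to produce an open neighbourhood of $\hat{x}$ contained in $F$, contradicting $\hat{x}\notin\textnormal{int}(F)$. Your closing remark that conditions $(b)$ and $(d)$ play no role here is also accurate.
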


\begin{proof} Recall that since $F$ is weak-admissible at $\hat{x}$ determined by  $C$, then  $S:=\lbrace \phi \in C: \phi(\hat{x})\neq 0 \rbrace$ is either empty or equi-lsc at $\hat{x}$. It follows that if $\inf_{\xi \in C}\xi(\hat{x})\neq 0$ (hence $\inf_{\xi \in C}\xi(\hat{x})> 0$), then $S=C$ and so by the equi-lower semicontinuity of $S$ at $\hat{x}$, for some $\varepsilon \in (0, \inf_{\xi \in C}\xi(\hat{x}))$,  there exists an open neighbourhood $\mathcal{O}_{\hat{x},\varepsilon}$ of $\hat{x}$ such that $\xi(x)-\xi(\hat{x})>-\varepsilon$ on $\mathcal{O}_{\hat{x},\varepsilon}$ for all $\xi \in C$. Thus, $\xi(x)>\xi(\hat{x})-\varepsilon\geq \inf_{\xi \in C}\xi(\hat{x}) -\varepsilon \geq0$ for all $x\in\mathcal{O}_{\hat{x},\varepsilon}$ and all $\xi \in C$. In other words, $\hat{x}\in \textnormal{int}(F)$.
\end{proof}

The Banach  space $(C^{1,\alpha}(Y),\|\cdot\|_{1,\alpha})$, where $Y$ is a Banach space, gives a quite general framework to  build examples of both convex and non-convex admissible sets.  

\begin{Exemp} \label{ex1} Let $D$ be a bounded subset of $(C^{1,\alpha}(Y),\|\cdot\|_{1,\alpha})$  and let $p\in Y^*$ such that $\|p\|> r:=\sup_{\phi\in D}\|\phi'\|_{\infty}=\sup_{\phi\in D}\|\phi\|_L$. Then, the set 
$$A:=[D-p]^{\times}:=\lbrace y \in Y: (\phi- p)(y) \geq 0, \hspace{1mm} \forall \phi \in D\rbrace,$$
is an admissible set. Indeed, $C:=D-p$ is equi-G-differentiable by Proposition ~\ref{EFrech}. Moreover, each function from $C$ is $k$-Lipschitz with a same $k>0$ (we can take $k=r+\|p\|$) and  $\overline{\textnormal{conv}}^{w^*}\lbrace \phi'(y): \phi \in D\rbrace\subset B_{Y^*}(0,r)$ is $w^*$-compact. Finally, since $\|p\|>\sup_{\phi\in D}\|\phi'\|_{\infty}$, it follows that $p\not \in \overline{\textnormal{conv}}^{w^*}\lbrace \phi'(y): \phi \in D\rbrace$. In consequence, $0\not \in \overline{\textnormal{conv}}^{w^*}\lbrace \psi'(y): \psi \in C\rbrace$. 
\end{Exemp}

 More generally, we have the following proposition.  For each fixed point $y \in Y$, consider the following surjective  bounded  linear operator
\begin{eqnarray*}
\delta'_y: (C^{1,\alpha}(Y),\|\cdot\|_{1,\alpha}) &\to& (Y^*,\|\cdot\|)\\
                      \psi &\mapsto& \delta'_y(\psi):=d_G \psi(y).
\end{eqnarray*}

\begin{proposition} Let $y \in Y$ and $K$ be a  convex $w^*$-compact subset of $Y^*$ such that $0\not \in K$. Then, for every bounded subset $C$ of $C^{1,\alpha}(Y)$ such that $ \delta'_y(C)\subset K$, we have that the set $[C]^{\times}$ is a admissible at $y$. 
\end{proposition}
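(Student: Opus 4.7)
The plan is to verify, one at a time, the three defining conditions (a), (b), (c) of Definition \ref{def2} at the point $y$, each of which follows essentially immediately from the hypotheses combined with Proposition \ref{EFrech}.

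First, condition (a) is trivially satisfied: the set in question is $F := [C]^\times$ by construction.

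For condition (b), the plan is to invoke Proposition \ref{EFrech} directly. Since $C$ is assumed to be a bounded subset of the Banach space $(C^{1,\alpha}_b(Y), \|\cdot\|_{1,\alpha})$, Proposition \ref{EFrech} yields that $C$ is (everywhere) equi-F-differentiable -- hence in particular equi-G-differentiable at $y$ -- and that $C$ is uniformly $r$-Lipschitz on $Y$ for some $r \geq 0$, which in particular gives $r$-equi-Lipschitzness at $y$.

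The heart of the argument, such as it is, lies in condition (c), but this too is just a one-line observation: by hypothesis $\{d_G \phi(y) : \phi \in C\} = \delta'_y(C) \subset K$, and since $K$ is convex and $w^*$-compact (in particular $w^*$-closed), passing to the $w^*$-closed convex hull preserves the inclusion, so
\[
\overline{\textnormal{conv}}^{w^*}\{d_G \phi(y) : \phi \in C\} \subset K.
\]
As $0 \notin K$ by assumption, it follows that $0 \notin \overline{\textnormal{conv}}^{w^*}\{d_G \phi(y) : \phi \in C\}$, which is condition (c).

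Thus there is no genuine obstacle: the proposition is in essence a packaging result, combining Proposition \ref{EFrech} (to handle equi-differentiability and equi-Lipschitzness) with the two structural hypotheses on $K$ (convex $w^*$-compactness and the fact that $0 \notin K$) in order to exhibit $C$ as a witness family realizing all three requirements of Definition \ref{def2} at $y$.
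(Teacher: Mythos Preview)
Your proof is correct and follows essentially the same approach as the paper, whose own proof is the single sentence ``We use the definition of admissible set together with Proposition~\ref{EFrech}.'' You have simply unpacked this by verifying conditions (a), (b), (c) of Definition~\ref{def2} explicitly, which is exactly what that sentence means.
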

\begin{proof} We use the definition of admissible set together with Proposition ~\ref{EFrech}. 
\end{proof}
Notice that if  $K$ has a nonempty norm-interior, then by the continuity of $\delta'_y$, we see that $(\delta'_y)^{-1}(K)$ has also a nonempty interior in $(C^{1,\alpha}(Y),\|\cdot\|_{1,\alpha})$.
\vskip5mm
Now, we consider a class of closed convex admissible sets. Let $C$ be a bounded subset of $Y^*$ and $(\lambda_{y^*})_{y^*\in C}\subset \R$, then the set $$A= \lbrace y \in Y : y^*(y)-\lambda_{y^*}\geq 0; \forall y^*\in C\rbrace$$ is a closed convex, weak-admissible set at each of its points and is determined by the set  $\lbrace y^*-\lambda_{y^*}: y^* \in C\rbrace$. If moreover we assume that $0\not \in \overline{\textnormal{conv}}^{w^*} C$, then $A$ is admissible at each of its points. We prove in Theorem \ref{Mi1} below  that the class of admissible sets includes in particular convex closed sets $A\neq Y$ whose recession cone $\mathcal{R}_A$ has a nonempty interior, this includes in particular the classe of closed convex cones (different from $Y$) with a nonempty interior. We need the following  lemma. 

\begin{lemma} \label{Mi} Let $Y$ be a  real normed space. Then the following assertions hold.

$(i)$ Let  $A\neq Y$ be a nonempty closed convex cone  of $Y$ (equivalently $A^*\neq \lbrace 0 \rbrace$) and $e\in Y$. Then,  $e\in \textnormal{int}(A)$ if and only if  $\inf \lbrace y^*(e): y^*\in S_{Y^*} \cap A^*\rbrace>0$. In consequence,  $\textnormal{int}(A)\neq \emptyset$ if and only if,  either $A=Y$ (equivalently $A^*= \lbrace 0 \rbrace$) or $ 0\not \in \overline{\textnormal{conv}}^{w^*}(S_{Y^*} \cap A^*)$.

$(ii)$ Let $K$ be a nonempty closed convex set. Then, $\textnormal{int}(\mathcal{R}_K)\neq \emptyset$ if and only if either $K=Y$ or $ 0\not \in \overline{\textnormal{conv}}^{w^*}(S_{Y^*} \cap (-\textnormal{bar}(K))$.
\end{lemma}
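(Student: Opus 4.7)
The plan is to prove part (i) in two steps -- first the characterization $e\in\textnormal{int}(A)\iff\inf\{y^{*}(e):y^{*}\in S_{Y^{*}}\cap A^{*}\}>0$ via Hahn--Banach, and then the ``In consequence'' clause via strict $w^{*}$-separation -- and then to deduce part (ii) from (i) applied to $\mathcal{R}_{K}$, using the identity $(\mathcal{R}_{K})^{*}=-\overline{\textnormal{bar}(K)}^{w^{*}}$ recalled in Section~\ref{prel}.

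For the forward direction of the first equivalence in (i), if $B_{Y}(e,r)\subset A$ then for every $y^{*}\in S_{Y^{*}}\cap A^{*}$ and $v\in B_{Y}(0,r)$ one has $y^{*}(e+v)\geq 0$, so $y^{*}(e)\geq r$. For the converse I would argue by contradiction: assuming $e\notin\textnormal{int}(A)$, choose $x_{n}\to e$ with $x_{n}\notin A$ and apply Hahn--Banach together with the fact that $A$ is a closed convex cone (so $\inf_{a\in A}y^{*}(a)=0$ whenever $y^{*}\in A^{*}$) to get $y_{n}^{*}\in S_{Y^{*}}\cap A^{*}$ with $y_{n}^{*}(x_{n})<0$; this forces $y_{n}^{*}(e)<\|e-x_{n}\|\to 0$, contradicting the positive infimum. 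For the ``In consequence'' part in the nontrivial case $A\neq Y$ (equivalently $A^{*}\neq\{0\}$): any $e\in\textnormal{int}(A)$ gives $y^{*}(e)\geq r$ on $S_{Y^{*}}\cap A^{*}$, and by $w^{*}$-continuity and linearity this extends to $\overline{\textnormal{conv}}^{w^{*}}(S_{Y^{*}}\cap A^{*})$, so $0$ avoids the hull; conversely, if $0$ is not in the hull, strict separation of the compact set $\{0\}$ from this $w^{*}$-closed convex set yields a $w^{*}$-continuous functional -- necessarily evaluation at some $e\in Y$ -- strictly positive on the hull, and the first equivalence gives $e\in\textnormal{int}(A)$.

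For part (ii), I would apply (i) to the closed convex cone $A:=\mathcal{R}_{K}$, noting that $K=Y\iff\mathcal{R}_{K}=Y$ for a nonempty closed convex $K$. Via $(\mathcal{R}_{K})^{*}=-\overline{\textnormal{bar}(K)}^{w^{*}}$, the task reduces to the equivalence
\[
0\notin\overline{\textnormal{conv}}^{w^{*}}(S_{Y^{*}}\cap(-\textnormal{bar}(K)))\iff 0\notin\overline{\textnormal{conv}}^{w^{*}}(S_{Y^{*}}\cap(-\overline{\textnormal{bar}(K)}^{w^{*}})).
\]
The implication ``$\Leftarrow$'' is immediate from the inclusion $-\textnormal{bar}(K)\subset-\overline{\textnormal{bar}(K)}^{w^{*}}$. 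For ``$\Rightarrow$'', I would strictly separate $\{0\}$ from the left-hand hull to obtain $e\in Y$ with $y^{*}(e)\geq\beta>0$ on $S_{Y^{*}}\cap(-\textnormal{bar}(K))$; rescaling along rays of the cone upgrades this to the homogeneous estimate $y^{*}(e)\geq\beta\|y^{*}\|$ on all of $-\textnormal{bar}(K)$, and I would then propagate it to $-\overline{\textnormal{bar}(K)}^{w^{*}}$ by taking $w^{*}$-limits and invoking the $w^{*}$-lower-semicontinuity of the norm.

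The main obstacle is precisely this last passage: a $w^{*}$-limit of norm-one functionals need not have norm one, so one cannot normalize on $S_{Y^{*}}$ and pass to the limit directly. The resolution is to first upgrade the bound from $S_{Y^{*}}\cap(-\textnormal{bar}(K))$ to the homogeneous estimate $y^{*}(e)\geq\beta\|y^{*}\|$ on the full cone; this inequality is $w^{*}$-lower-semicontinuous in $y^{*}$ (since $\|\cdot\|$ is $w^{*}$-lsc and $y^{*}\mapsto y^{*}(e)$ is $w^{*}$-continuous), so it survives passage to $-\overline{\textnormal{bar}(K)}^{w^{*}}$, whence the right-hand hull in particular avoids $0$.
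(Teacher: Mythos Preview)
Your proposal is correct, and in part (ii) it is actually more scrupulous than the paper. Two points of comparison are worth noting.

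For the implication $\inf\{y^{*}(e):y^{*}\in S_{Y^{*}}\cap A^{*}\}>0\Rightarrow e\in\textnormal{int}(A)$ in (i), you argue by contradiction through a sequence $x_{n}\to e$, $x_{n}\notin A$, and Hahn--Banach separation. The paper instead proceeds directly: if the infimum equals $\alpha>0$, then for every $h\in B_{Y}(0,\alpha/2)$ and every $y^{*}\in S_{Y^{*}}\cap A^{*}$ one has $y^{*}(e+h)\geq\alpha-\|h\|\geq\alpha/2>0$, and by positive homogeneity $z^{*}(e+h)\geq 0$ for all $z^{*}\in A^{*}$, whence $e+h\in A^{**}=A$. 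Both arguments are valid; the paper's gives an explicit radius for the interior ball, while yours avoids invoking the bipolar identity $A^{**}=A$. For the other implication your one-line argument (taking the supremum of $-y^{*}(v)$ over $v\in B_{Y}(0,r)$) is cleaner than the paper's, which first establishes the auxiliary inclusion $\textnormal{int}(A)\subset\{y:y^{*}(y)>0,\ \forall y^{*}\in A^{*}\setminus\{0\}\}$.

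For (ii), the paper simply writes ``apply part (i) with $A:=\mathcal{R}_{K}$, using the formula $\overline{\textnormal{bar}(K)}^{w^{*}}=-(\mathcal{R}_{K})^{*}$''. You correctly observe that this formula delivers $S_{Y^{*}}\cap(-\overline{\textnormal{bar}(K)}^{w^{*}})$ rather than the set $S_{Y^{*}}\cap(-\textnormal{bar}(K))$ appearing in the statement, and you supply the missing equivalence. Your argument---upgrading $y^{*}(e)\geq\beta$ on the sphere slice to the homogeneous bound $y^{*}(e)\geq\beta\|y^{*}\|$ on the full cone $-\textnormal{bar}(K)$, then passing to the $w^{*}$-closure via $w^{*}$-lower-semicontinuity of the norm---is exactly the right way to close this gap, which the paper leaves implicit.
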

\begin{proof} $(i)$ Suppose that $\inf \lbrace y^*(e): y^*\in S_{Y^*} \cap A^*\rbrace=\alpha>0$.  For all $h\in B_Y(0,\frac{\alpha}{2})$ and for all $ y^*\in S_{Y^*} \cap A^*$, we have $y^*(h+e)\geq -\|h\| +y^*(e) \geq -\|h\| + \alpha\geq \frac{\alpha}{2}$. In consequence, by positive homogeneity,   we have $z^*(h+e)\geq 0$  for all $z^*\in A^*$. It follows that $h+e\in A$ for all $h\in B_Y(0,\frac{\alpha}{2})$. In other words, $e\in \textnormal{int}(A)$. To see the converse, let $e\in \textnormal{int}(A)$.  Recall  that,
$$\textnormal{int}(A)\subset \lbrace y\in Y: y^*(y)> 0; \hspace{1mm} \forall y^*\in A^*\setminus \lbrace 0\rbrace\rbrace.$$
 Suppose by contradiction that the inclusion is not true, that is, there exists $a\in \textnormal{int}(A)$ and  some $y^*_0\in A^*\setminus \lbrace 0\rbrace$ satisfying $y^*_0(a)=0$. There exists $\varepsilon >0$ such that $B_Y(a,\varepsilon)\subset A$. Let $h\in B_Y(0,1)$, then we have $y^*_0(\varepsilon h)=y^*_0(-a)+y^*_0(a+\varepsilon h)=y^*_0(a+\varepsilon h)\geq 0$ since $a+\varepsilon h \in A$ and $y^*_0 \in  A^* \setminus \lbrace 0\rbrace$. This implies that $y^*_0(\varepsilon h)=0$ for all $h\in B_Y(0,1)$. Thus, $y^*_0=0$ which is a contradiction. Thus, our inclusion is true. Now, since $e\in \textnormal{int}(A)$,  then  $B_Y(e,\varepsilon)\subset \textnormal{int}(A)$ for some $\varepsilon >0$.  It follows that $$B_Y(e,\varepsilon)\subset \lbrace y\in Y: y^*(y)> 0; \hspace{1mm} \forall y^*\in A^* \setminus \lbrace 0\rbrace\rbrace.$$ 
Thus, for every $h\in B_Y(0,1)$ and every $y^*\in A^* \setminus \lbrace 0\rbrace$, we have that $y^*(e-\varepsilon h) >0$. Equivalently, $y^*(e)>\varepsilon y^*(h)$ for all $y^*\in S_{Y^*} \cap A^*$. By taking the supremum over $h\in B_Y(0,1)$, since $\|y^*\|=1$, we get $y^*(e) \geq \varepsilon$ for all $y^*\in S_{Y^*} \cap A^*$.  It follows that 
$$\inf_{y^*\in S_{Y^*} \cap A^*} y^*(e)\geq \varepsilon.$$
This completes the proof of the equivalence. To finish the part $(i)$,  it suffices to see that for each $e\in A$, $\inf_{y^*\in S_{Y^*} \cap A^*} y^*(e) = \inf_{y^*\in \overline{\textnormal{conv}}^{w^*}(S_{Y^*} \cap A^*)} y^*(e)$ and that by the Hahn-Banach theorem  $0\not \in \overline{\textnormal{conv}}^{w^*}(S_{Y^*} \cap A^*)$  if and only if there exists $e\in A\setminus \lbrace 0\rbrace$ such that $\inf_{y^*\in \overline{\textnormal{conv}}^{w^*}(S_{Y^*} \cap A^*)} y^*(e) >0$.
To prove $(ii)$, we apply part $(i)$ with the closed convex cone $A:=\mathcal{R}_K$, using the formula in (\ref{theformulas}).
\end{proof}

\begin{theorem} \label{Mi1} Let $Y$ be a normed vector space and $A$ be a  closed convex set $A$ such that $\textnormal{bar}(A)\neq \lbrace 0\rbrace$ (equivalently $A\neq Y$). Then, $A$ is weak-admissible at each of its points and is determined by the set $$C=\lbrace y^* - \inf_{x\in A} y^*(x): y^*\in S_{Y^*}\cap (-\textnormal{bar}(A))\rbrace.$$
Moreover, $A$ is an admissible set (everywhere, and determined by $C$) whenever $0\not \in \overline{\textnormal{conv}}^{w^*}(S_{Y^*} \cap (-\textnormal{bar}(A)))$. In consequence,  the following assertions hold.

$(i)$ Every closed convex set $A$ such that $A\neq Y$ and $\textnormal{int}(\mathcal{R}_A)\neq \emptyset$ is admissible (at each of its points), determined by $C$. 

$(ii)$ In particular, every closed convex cone $A$ of $Y$ with $A\neq Y$ and $\textnormal{int}(A)\neq \emptyset$ is an admissible set determined by $C=S_{Y^*} \cap A^*$. 
\end{theorem}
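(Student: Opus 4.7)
The plan is to verify directly the four items of Definition~\ref{def1} (and the additional item of Definition~\ref{def2}) for the explicit family $C$ exhibited in the statement. Since the members of $C$ are continuous affine functions whose G\^ateaux-differential at every point is a fixed element of $S_{Y^*}$, conditions (b)--(d) will be essentially automatic; the only substantive step is the set-theoretic identification $A=[C]^{\times}$, for which I would invoke Proposition~\ref{barrier} together with a positive-homogeneity rescaling.

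For $A=[C]^{\times}$, I would unfold Proposition~\ref{barrier} as
\begin{align*}
A &= \bigcap_{y^*\in\textnormal{bar}(A)} \{y\in Y : y^*(y)\le \sup_{z\in A} y^*(z)\}\\
  &= \bigcap_{y^*\in -\textnormal{bar}(A)} \{y\in Y : y^*(y)\ge \inf_{z\in A} y^*(z)\}.
\end{align*}
Because $-\textnormal{bar}(A)$ is a cone, multiplying $y^*$ by any $\lambda>0$ preserves each inequality, so one may restrict to $y^*\in S_{Y^*}\cap(-\textnormal{bar}(A))$ (the case $y^*=0$ gives a vacuous inequality and is discarded), yielding $A=[C]^{\times}$. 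Then, fixing $\hat{x}\in A$ and $\phi(y):=y^*(y)-\inf_{z\in A} y^*(z)$ in $C$, affinity forces $d_G\phi(\hat{x})=y^*$ and makes the equi-G-differentiability quotient identically zero, giving (b). The normalization $\|y^*\|=1$ makes every $\phi\in C$ $1$-Lipschitz, so $C$ is uniformly equicontinuous, providing at once the $r$-equi-Lipschitz condition (with $r=1$) and the equi-lsc of $S$ at $\hat{x}$ (condition (c)). Finally $\{d_G\phi(\hat{x}):\phi\in C\}\subset B_{Y^*}(0,1)$, so its $w^*$-closed convex hull lies in the $w^*$-compact unit ball by Banach--Alaoglu, which is (d).

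For the admissibility part, the extra clause in Definition~\ref{def2}(c) is precisely the hypothesis $0\notin\overline{\textnormal{conv}}^{w^*}(S_{Y^*}\cap(-\textnormal{bar}(A)))$, since that set equals $\{d_G\phi(\hat{x}):\phi\in C\}$. Consequence (i) then follows by invoking Lemma~\ref{Mi}(ii), which turns $\textnormal{int}(\mathcal{R}_A)\neq\emptyset$ (together with $A\neq Y$) into the required non-membership. For (ii), when $A$ is a closed convex cone one has $-\textnormal{bar}(A)=A^*$ and $\inf_{z\in A}y^*(z)=0$ for every $y^*\in A^*$ (because $0\in A$ and $y^*\ge 0$ on $A$), so $C$ reduces to $S_{Y^*}\cap A^*$ and the interior hypothesis is converted by Lemma~\ref{Mi}(i). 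I foresee no real obstacle: the positive-homogeneity rescaling in Step~1 is the only delicate spot and is routine; the entire proof is an alignment of definitions in which the $1$-Lipschitz normalization does essentially all the work.
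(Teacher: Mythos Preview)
Your proposal is correct and follows essentially the same line as the paper's own proof: Proposition~\ref{barrier} plus the positive-homogeneity rescaling to the sphere give $A=[C]^{\times}$, the affine $1$-Lipschitz nature of the members of $C$ yields (b)--(d) of Definition~\ref{def1} and the equi-Lipschitz clause of Definition~\ref{def2}, and Lemma~\ref{Mi} handles consequences (i) and (ii). If anything, you spell out more carefully than the paper why the equi-G-differentiability quotient vanishes identically and why Banach--Alaoglu supplies the $w^*$-compactness.
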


\begin{proof} From Proposition \ref{barrier}, we have 
\begin{eqnarray*}
 A &=& \cap_{y^*\in \textnormal{bar}(A)} \lbrace y \in Y : y^*(y)\leq \sup_{z\in A} y^*(z) \rbrace\\
    &=&  \lbrace y \in Y : y^*(y) - \inf_{x\in A} y^*(x)\geq 0, \hspace{1mm} \forall y^*\in  -\textnormal{bar}(A)\rbrace\\
   &=&  \lbrace y \in Y : y^*(y) - \inf_{x\in A} y^*(x)\geq 0, \hspace{1mm} \forall y^*\in  S_{Y^*} \cap (-\textnormal{bar}(A))\rbrace\\
&=& [C]^{\times},
\end{eqnarray*}
where,  $C=\lbrace y^* - \inf_{x\in A} y^*(x): y^*\in S_{Y^*}\cap (-\textnormal{bar}(A))\rbrace$. Clearly, $C$ is everywhere equi-G-differentiable family of $1$-Lipschitz functionals so $A$ is weak-admissible at each of its points by Definition \ref{def1}. If we assume that $0\not \in \overline{\textnormal{conv}}^{w^*}(S_{Y^*} \cap (-\textnormal{bar}(A)))$ then, clearly $$0\not \in \overline{\textnormal{conv}}^{w^*}\lbrace d_G \phi(\hat{x})): \phi \in C\rbrace = \overline{\textnormal{conv}}^{w^*}(S_{Y^*}\cap (-\textnormal{bar}(A))),$$
and so $A$ is an admissible set determined by $C$ (Definition \ref{def2}). The parts $(i)$ and $(ii)$ are consequences of  Lemma \ref{Mi}.

\end{proof}

\subsection{The set of multipliers.}

We introduce the following subset of the dual $E^*$. If $\hat{x} \in [C]^\times$ for some set $C$ of Gateaux differentiable functions at $\hat{x}$, we denote
$$\mathcal{T}_{C}(\hat{x}):=\cap_{n\geq 1} \overline{\textnormal{conv}}^{w^*}\lbrace d_G\phi(\hat{x}): \phi(\hat{x}) \in [0, \frac{1}{n}], \phi\in C\rbrace \subset E^*.$$
This set plays a crucial role in obtaining non-trivial Lagrange multipliers. In this paper, the set of non-trivial multipliers  associated to the prolem $(\mathcal{P})$, will be given from the set $\R^+\times \R^+\mathcal{T}_{C}(\hat{x})$. It is clear that $$\overline{\textnormal{conv}}^{w^*}\lbrace d_G\phi(\hat{x}): \phi(\hat{x}) =0, \phi\in C\rbrace \subset \mathcal{T}_{C}(\hat{x})\subset \overline{\textnormal{conv}}^{w^*}\lbrace d_G \phi(\hat{x}): \phi\in C\rbrace.$$ However, in the general framework when $C$ is infinite, these inclusions can be strict even in $\R^2$ as shown in the following example.

\begin{Exemp} In $E=\R^2$ let $C=\lbrace \phi_k:  k\geq 0 \rbrace \cup\lbrace \psi\rbrace$, where $\psi(x,y)=x+y$, $\phi_0(x,y)=x$,  and for all $k\geq 1$, $\phi_k(x,y)=y+\frac{1}{k}$. We see that $(0,0)\in [C]^\times=\R^+\times \R^+$, $\mathcal{T}_{C}((0,0))=\textnormal{conv}\lbrace (1,0), (0,1)\rbrace$, but $\textnormal{conv}\lbrace d_G\phi(\hat{x}): \phi(\hat{x}) =0, \phi\in C\rbrace = \lbrace (1,0)\rbrace$ and $\textnormal{conv}\lbrace d_G \phi(\hat{x}): \phi\in C\rbrace =\textnormal{conv}\lbrace (1,0), (0,1), (1,1)\rbrace $.
\end{Exemp}

\begin{remark} The set $\mathcal{T}_{C}(\hat{x})$ is of no interest when the optimal solution $\hat{x}\in \textnormal{int}([C]^\times)$ since in this case the problem $(\mathcal{P})$ is free of constraints. However, this set is crucial when $\hat{x}\in  [C]^\times\setminus \textnormal{int}([C]^\times)$. The following proposition guarantees that in this case, the set $\mathcal{T}_{C}(\hat{x})$ is always nonempty.
\end{remark}

\begin{proposition} \label{nonvide} Let $E$ be a l.c.t.v  space and $\hat{x}\in [C]^\times$ for some set $C$ of real-valued Gateaux differentiable functions at $\hat{x}$ such that $\overline{\textnormal{conv}}^{w^*}\lbrace d_G \phi(\hat{x}): \phi\in C\rbrace$ is $w^*$-compact in $E^*$. Then, $\mathcal{T}_{C}(\hat{x})\neq \emptyset$ if and only if $\inf_{\xi \in C}\xi(\hat{x})= 0$. In particular, $\mathcal{T}_{C}(\hat{x})\neq \emptyset$ whenever $\hat{x}\in F\setminus \textnormal{int}(F)$ where $F$ is a nonempty subset of $E$  weak-admissible at $\hat{x}$ and determined by $C$.
\end{proposition}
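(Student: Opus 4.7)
The plan is to handle the two directions of the equivalence separately, using a finite intersection property argument for the nontrivial direction, and then to invoke Proposition \ref{lemmainf} for the consequence.

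First I would set $K_n := \overline{\textnormal{conv}}^{w^*}\lbrace d_G\phi(\hat{x}): \phi(\hat{x}) \in [0, \tfrac{1}{n}], \phi\in C\rbrace$, so that $\mathcal{T}_C(\hat{x}) = \cap_{n\geq 1} K_n$. Note that since $\hat{x}\in [C]^\times$, we have $\phi(\hat{x})\geq 0$ for every $\phi \in C$, so $\inf_{\xi \in C}\xi(\hat{x}) \geq 0$ automatically.

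For the easy direction, suppose $\inf_{\xi \in C}\xi(\hat{x}) = \alpha > 0$. Then for every integer $n$ with $n > 1/\alpha$, there is no $\phi \in C$ with $\phi(\hat{x}) \in [0,1/n]$, so $K_n = \emptyset$ (the closed convex hull of the empty set being empty), hence $\mathcal{T}_C(\hat{x}) = \emptyset$. Conversely, suppose $\inf_{\xi \in C}\xi(\hat{x}) = 0$. For each $n \geq 1$, the definition of the infimum together with $\phi(\hat{x})\geq 0$ produces some $\phi_n \in C$ with $\phi_n(\hat{x}) \in [0,1/n]$, so $d_G\phi_n(\hat{x}) \in K_n$; in particular each $K_n$ is nonempty. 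By construction $K_n$ is $w^*$-closed, the sequence is decreasing ($K_{n+1}\subset K_n$), and all $K_n$ are contained in the $w^*$-compact set $K := \overline{\textnormal{conv}}^{w^*}\lbrace d_G\phi(\hat{x}): \phi\in C\rbrace$ assumed in the hypothesis. The finite intersection property for $w^*$-compact sets then gives $\mathcal{T}_C(\hat{x}) = \cap_{n\geq 1}K_n \neq \emptyset$.

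For the final assertion, if $F=[C]^\times$ is weak-admissible at $\hat{x}$ determined by $C$, then condition $(d)$ of Definition \ref{def1} provides the $w^*$-compactness of $\overline{\textnormal{conv}}^{w^*}\lbrace d_G\phi(\hat{x}): \phi\in C\rbrace$, and $\hat{x}\in F\setminus \textnormal{int}(F)$ forces $\inf_{\xi\in C}\xi(\hat{x})=0$ by Proposition \ref{lemmainf}. Applying the first part concludes $\mathcal{T}_C(\hat{x})\neq \emptyset$.

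The only step that requires any care is justifying that the decreasing intersection of nonempty $w^*$-closed subsets of a $w^*$-compact set is nonempty; this is the standard compactness argument and is really the crux of the proof. Everything else is bookkeeping from the definitions.
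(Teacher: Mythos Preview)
Your proof is correct and follows essentially the same route as the paper: the paper also defines the decreasing sequence $D_n:=\overline{\textnormal{conv}}^{w^*}\lbrace d_G\phi(\hat{x}): \phi(\hat{x}) \in [0, \frac{1}{n}], \phi\in C\rbrace$, observes that each $D_n$ is nonempty and $w^*$-compact when $\inf_{\xi\in C}\xi(\hat{x})=0$, and concludes via the nested intersection property, dismissing the converse as trivial and invoking Proposition~\ref{lemmainf} for the final assertion. Your contrapositive treatment of the ``trivial'' direction and your phrasing via the finite intersection property inside the ambient compact $K$ are minor stylistic variations, not a different argument.
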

\begin{proof} Suppose that $\inf_{\xi \in C}\xi(\hat{x})= 0$. Then, for each $n\geq 1$, the set $D_n:=\overline{\textnormal{conv}}^{w^*}\lbrace d_G\phi(\hat{x}): \phi(\hat{x}) \in [0, \frac{1}{n}], \phi\in C\rbrace$ is nonempty and $w^*$-compact. Moreover, the sequence $(D_n)_{n\geq 1}$ is non-increasing and $\mathcal{T}_{C}(\hat{x})=\cap_{n\geq 1} D_n$. Hence, $\mathcal{T}_{C}(\hat{x})$ is a nonempty $w^*$-compact set as intersection of non-increasing sequence of nonemppty $w^*$-compact sets. The converse is trivial. Finally, if $\hat{x}\in F\setminus \textnormal{int}(F)$ where $F$ is a non empty subset of $E$, which is weak-admissible at $\hat{x}$ and determined by $C$, then using Proposition \ref{lemmainf} we get that $\inf_{\xi \in C}\xi(\hat{x})= 0$. Hence, $\mathcal{T}_{C}(\hat{x})\neq \emptyset$ using what has just been proved above.
\end{proof}

\subsection{Examples}
The examples below  and Example \ref{nonzero} shows that the set $\mathcal{T}_{C}(\hat{x})$ is precise enough to encompass the classical results found in the literature, such as when the set of constraints is finite or when $A=[C]^\times$ is a closed convex set.

\begin{Exemp} \label{ExN2} Let $E$ be a l.c.t.v space and $F=[C]^\times$ be a nonempty subset of $E$ weak-admissible at $\hat{x}\in F\setminus \textnormal{int}(F)$ and determined by $C$, so that $\mathcal{T}_{C}(\hat{x})\neq \emptyset$ by Proposition \ref{nonvide}. Then, the following assertions hold.

$(i)$ if $S:=\lbrace \phi \in C: \phi(\hat{x})\neq 0\rbrace$ is a  finite set, we have
$$\mathcal{T}_{C}(\hat{x})=\overline{\textnormal{conv}}^{w^*}\lbrace d_G\phi(\hat{x}): \phi(\hat{x})=0, \phi\in C\rbrace.$$
In particular if the set $C$ is finite, then 
$$\mathcal{T}_{C}(\hat{x})=\textnormal{conv}\lbrace d_G\phi(\hat{x}): \phi(\hat{x})=0, \phi\in C\rbrace.$$

$(ii)$ Assume that $E=Y$ is a Banach space and $C$ is relatively compact in $(C_b^G(X),\|\cdot\|_G)$. Then, 
$$\mathcal{T}_{C}(\hat{x})\subset \lbrace d_G\psi(\hat{x}): \psi(\hat{x})=0, \psi\in \overline{\textnormal{conv}}^{\|\cdot\|_G}(C)\rbrace.$$
If moreover,  $C$ is assumed to be convex and norm-compact, then
$$\mathcal{T}_{C}(\hat{x})= \lbrace d_G\psi(\hat{x}): \psi(\hat{x})=0, \psi\in C\rbrace.$$

$(iii)$ Assume that $E=Y$ is a normed vector space and $F\neq Y$ is  closed and convex, then $F=[C]^\times$, where $C=\lbrace y^* - \inf_{x\in F} y^*(x): y^*\in S_{Y^*}\cap (-\textnormal{bar}(F))\rbrace$ and we have  
\begin{eqnarray*}
\mathcal{T}_{C}(\hat{x})&\subset& \lbrace y^*\in \overline{\textnormal{conv}}^{w^*}(S_{Y^*}\cap (-\textnormal{bar}(F))) :  y^*(\hat{x})=\inf_{x\in F} y^*(x)\rbrace\\
&\subset& \lbrace y^*\in (\mathcal{R}_A)^*: y^*(\hat{x})=\inf_{x\in F} y^*(x)\rbrace.
\end{eqnarray*}
If $F\neq Y$ is a closed convex cone, then $F=[C]^\times$ where $C=S_{Y^*}\cap F^*$ and 
\begin{eqnarray*}
\mathcal{T}_{C}(\hat{x})&\subset& \lbrace y^*\in \overline{\textnormal{conv}}^{w^*}(S_{Y^*}\cap F^*) :  y^*(\hat{x})=0\rbrace\\
&\subset& \lbrace y^*\in  F^*:  y^*(\hat{x})=0\rbrace.
\end{eqnarray*}
\end{Exemp}

\begin{proof} The part $(i)$ is trivial. We prove $(ii)$. Indeed, we have 
\begin{eqnarray*}
\mathcal{T}_{C}(\hat{x})&:=&\cap_{n\geq 1} \overline{\textnormal{conv}}^{w^*}\lbrace d_G \phi(\hat{x}) :  \phi(\hat{x}) \in [0, \frac{1}{n}], \phi \in C\rbrace\\
&\subset & \cap_{n\geq 1} \overline{\lbrace d_G \psi(\hat{x}) :  \psi(\hat{x}) \in [0, \frac{1}{n}], \psi \in \overline{\textnormal{conv}}^{\|\cdot\|_G}(C) \rbrace}^{w^*}.
\end{eqnarray*}
The set $Q_n:=\lbrace  \psi \in \overline{\textnormal{conv}}^{\|\cdot\|_G}(C) :  \psi(\hat{x}) \in [0, \frac{1}{n}]\rbrace$ is compact as a closed subset of the compact $\overline{\textnormal{conv}}^{\|\cdot\|_G}(C)$ in the Banach space $(C_b^G(X),\|\cdot\|_G)$. Since the linear map $\delta'_{\hat{x}}: (C_b^G(X),\|\cdot\|_G)\to (Y^*,\|\cdot\|)$ defined by 
$\delta'_{\hat{x}}(\psi)=d_G \psi(\hat{x})$ is continuous, it follows that $\delta'_{\hat{x}}(Q_n)$ is norm compact in $Y^*$. In particular it is $w^*$-compact and so $w^*$-closed. Thus, we get
\begin{eqnarray*}
\mathcal{T}_{C}(\hat{x}) &\subset & \cap_{n\geq 1} \lbrace d_G \psi(\hat{x}) :  \psi(\hat{x}) \in [0, \frac{1}{n}], \psi \in \overline{\textnormal{conv}}^{\|\cdot\|_G}(C) \rbrace\\
&=&\lbrace d_G \psi(\hat{x}) :  \psi(\hat{x})=0, \psi \in \overline{\textnormal{conv}}^{\|\cdot\|_G}(C) \rbrace.
\end{eqnarray*}
If moreover we assume that $C$ is convex and compact, that is, $\overline{\textnormal{conv}}^{\|\cdot\|_G}(C)=C$, then clearly from the above inclusion we have $\mathcal{T}_{C}(\hat{x}) \subset  \lbrace d_G \psi(\hat{x}) :  \psi(\hat{x})=0, \psi \in C \rbrace.$ The reverse inclusion is always true. 

 In a similar way we prove $(iii)$. Indeed, by Theorem \ref{Mi1}, since $F$ is a closed convex set, it is determined by $C=\lbrace y^* - \inf_{x\in F} y^*(x): y^*\in S_{Y^*}\cap (-\textnormal{bar}(F))\rbrace.$ Thus, by definition
\begin{eqnarray*}
\mathcal{T}_{C}(\hat{x}) &:=& \cap_{n\geq 1} \overline{\textnormal{conv}}^{w^*}\lbrace d_G \phi(\hat{x}): \phi(\hat{x})\in [0,\frac{1}{n}], \phi \in C \rbrace\\
&=& \cap_{n\geq 1} \overline{\textnormal{conv}}^{w^*}\lbrace y^*\in S_{Y^*}\cap (-\textnormal{bar}(F)):  y^*(\hat{x}) -\inf_{x\in F} y^*(x) \in [0,\frac{1}{n}] \rbrace\\
&\subset&\cap_{n\geq 1} \lbrace y^* \in \overline{\textnormal{conv}}^{w^*}(S_{Y^*}\cap (-\textnormal{bar}(F))): y^*(\hat{x}) -\inf_{x\in F} y^*(x) \in [0,\frac{1}{n}]\rbrace\\
&=& \lbrace y^* \in \overline{\textnormal{conv}}^{w^*}(S_{Y^*}\cap (-\textnormal{bar}(F))): y^*(\hat{x})=\inf_{x\in F} y^*(x)\rbrace\\
&\subset& \lbrace y^*\in (\mathcal{R}_A)^*: y^*(\hat{x})=\inf_{x\in F} y^*(x)\rbrace \textnormal{ (by the formula (\ref{theformulas}))}
\end{eqnarray*}
If $F$ is a closed convex cone, we know that $F^*=-\textnormal{bar}(F)$ and $\inf_{x\in F} y^*(x)=0$, for all $y^*\in F^*$.
\end{proof}
Semi-infinite programming (SIP) problems are optimization problems in which there is an infinite number of variables or an infinite number of constraints (but not both). A general SIP problem can be formulated as
\begin{equation*}
(\mathcal{P})
\left \{
\begin{array}
[c]{l}
\max f\\
x \in \Omega \\
h(x,t)\geq 0, \forall t\in T
\end{array}
\right. 
\end{equation*}
where $\Omega$ is a nonempty open subset of $\R^p$, $x=(x_1,...,x_p)\in \R^p$, $T$ is an infinite set, and all the functions are real-valued. We denote $T(x)=\lbrace t\in T: h(x, t)=0\rbrace$, for $x\in \R^p$. 

\begin{Exemp} \label{ExN2bis} Let $T$ be a nonempty Hausdorff compact topological space, $\hat{x}\in \R^n$ and $h: \R^p\times T\to \R$ be a function such that:

$(i)$ for each $t\in T$, the function $h(\cdot, t)$ is Gateaux-differentiale at $\hat{x}$, we denote $\nabla_x h(\hat{x}, t)$ the Gateaux-differential of $h(\cdot, t)$ at $\hat{x}$,

$(ii)$ the functions $t\mapsto h(\hat{x}, t)$ and $t\mapsto \nabla_x h(\hat{x}, t)$ are continuous,

$(iii)$ $T(\hat{x})\neq \emptyset$.

\noindent Set $C=\lbrace h(\cdot,t): t\in T\rbrace$ and suppose that $\hat{x}\in [C]^\times$. Then, 
$$\mathcal{T}_{C}(\hat{x})=\textnormal{conv}\lbrace \nabla_x h(\hat{x}, t): t\in T(\hat{x})\rbrace.$$

\end{Exemp}
\begin{proof} First, notice that by using the continuity of the functions $h(\hat{x}, \cdot)$, $\nabla_x h(\hat{x}, \cdot)$ and the compactness of $T$, we get that $\nabla_x h(\hat{x}, T):=\lbrace \nabla_x h(\hat{x}, t): t\in T\rbrace$ is a compact subset of $\R^p$ and that 
$\lbrace \nabla_x h(\hat{x}, t):  h(\hat{x},t) \in [0, \frac{1}{n}], t\in T\rbrace$ is a closed subset of $\nabla_x h(\hat{x}, T)$ and so it is a compact subset, for each $n\geq 1$. It follows that $\textnormal{conv}\lbrace \nabla_x h(\hat{x}, t) :  h(\hat{x},t) \in [0, \frac{1}{n}], t\in T \rbrace$ is a compact subset of $\R^p$ for each $n\geq 1$. On the other hand, by the coincidence of the weak$^*$ and norm topologies on $\R^p$, using the definition of $\mathcal{T}_{C}(\hat{x})$, we get
\begin{eqnarray*}
\mathcal{T}_{C}(\hat{x}) &:=&\cap_{n\geq 1} \overline{\textnormal{conv}}^{w^*}\lbrace \nabla_x h(\hat{x}, t):  h(\hat{x},t) \in [0, \frac{1}{n}], t\in T\rbrace\\
&=& \cap_{n\geq 1} \textnormal{conv}\lbrace \nabla_x h(\hat{x}, t) :  h(\hat{x},t) \in [0, \frac{1}{n}], t\in T \rbrace\\
&\subset& \cap_{n\geq 1} \lbrace d_G \phi(\hat{x}) :  \phi(\hat{x}) \in [0, \frac{1}{n}], \phi \in  \textnormal{conv}\lbrace h(\hat{x}, t), t\in T \rbrace \rbrace\\
&=& \lbrace d_G \phi(\hat{x}) :  \phi(\hat{x})=0, \phi \in  \textnormal{conv}\lbrace h(\hat{x}, t), t\in T \rbrace \rbrace.
\end{eqnarray*}
Now, we prove that $$\lbrace d_G \phi(\hat{x}) :  \phi(\hat{x})=0, \phi \in  \textnormal{conv}\lbrace h(\hat{x}, t), t\in T \rbrace \rbrace \subset \textnormal{conv} \lbrace d_G h(\hat{x},t) :  t\in T(\hat{x}) \rbrace.$$ Indeed, if $q\in \lbrace d_G \phi(\hat{x}) :  \phi(\hat{x})=0, \phi \in  \textnormal{conv}\lbrace h(\hat{x}, t), t\in T \rbrace \rbrace$, then there exists $t_1,...,t_m\in T$, $\lambda_1, ...,\lambda_m\geq 0$ such that $\sum_{i=1}^m \lambda_i=1$, $q=\sum_{i=1}^m \lambda_i d_G h(\hat{x},t_i)$ and $\sum_{i=1}^m \lambda_i h(\hat{x},t_i)=0$. Since, $h(\hat{x},t_i)\geq 0$ for all $1\leq i \leq m$, we have $\lambda_i h(\hat{x},t_i)=0$ for all $1\leq i \leq m$. Thus, $\lambda_i=0$ if $h(\hat{x},t_i)\neq 0$ and so $q\in \textnormal{conv} \lbrace d_G h(\hat{x},t) :  t\in T(\hat{x}) \rbrace$. Finally, we have $\mathcal{T}_{C}(\hat{x})  \subset \textnormal{conv} \lbrace d_G h(\hat{x},t) :  t\in T(\hat{x}) \rbrace$. The reverse inclusion is always true.
\end{proof}

\begin{Exemp}  \label{nonzero} Let $Y$ be a normed vector space and $A=[C]^\times$ be an admissible set at $\hat{x}\in A\setminus \textnormal{int}(A)$. Then, $0\not \in \mathcal{T}_{C}(\hat{x})\neq \emptyset$. In consequence, the following assertions hold.

$(i)$ If $A\neq Y$ is a  closed convex subset such that $\textnormal{int}(\mathcal{R}_A)\neq \emptyset$ then, with $C=\lbrace y^* - \inf_{x\in A} y^*(x): y^*\in S_{Y^*}\cap (-\textnormal{bar}(A))\rbrace$ we have $A=[C]^\times$, $A$ is admissible at $\hat{x}\in A\setminus \textnormal{int}(A)$, $0\not \in \mathcal{T}_{C}(\hat{x})\neq \emptyset$ and 
\begin{eqnarray*}
\mathcal{T}_{C}(\hat{x}) &\subset& \lbrace y^*\in \overline{\textnormal{conv}}^{w^*}(S_{Y^*}\cap (-\textnormal{bar}(A))) :  y^*(\hat{x})=\inf_{x\in A} y^*(x)\rbrace\\
&\subset& \lbrace y^*\in (\mathcal{R}_A)^*\setminus\lbrace 0\rbrace: y^*(\hat{x})=\inf_{x\in A} y^*(x)\rbrace.
\end{eqnarray*}

$(ii)$ In particular, if $A\neq Y$ is a  closed convex cone with a nonempty interior, then $A=[S_{Y^*}\cap A^*]^\times$ and $0\not \in \mathcal{T}_{S_{Y^*}\cap A^*}(\hat{x})\subset \lbrace y^*\in \overline{\textnormal{conv}}^{w^*}(S_{Y^*}\cap A^*) :  y^*(\hat{x})=0\rbrace\subset \lbrace y^*\in  A^*\setminus \lbrace 0\rbrace :  y^*(\hat{x})=0\rbrace.$ 
\end{Exemp}

\begin{proof} By Proposition \ref{nonvide}, $\mathcal{T}_{C}(\hat{x})\neq \emptyset$. By the definition of admissible set, $0\not \in \overline{\textnormal{conv}}^{w^*}\lbrace d_G\phi(\hat{x}): \phi\in C\rbrace$ and by the definition of $\mathcal{T}_{C}(\hat{x})$, we have $\mathcal{T}_{C}(\hat{x}) \subset \overline{\textnormal{conv}}^{w^*}\lbrace d_G\phi(\hat{x}): \phi\in C\rbrace$.
Hence, $0\not \in \mathcal{T}_{C}(\hat{x})\neq \emptyset$. For the rest, we use the part $(iii)$ of Example \ref{ExN2} and Theorem \ref{Mi1}.
\end{proof}

\section{The main results.} \label{Main-results}
This section is divided into two parts. We first treat the case of optimization problems with only constraints of inequalities in the general framework of l.c.t.v. spaces. Then, we will consider the case of problems with both inequality and equality constraints, in the framework of Banach spaces.

\subsection{Inequalities constraints in a Hausdorff locally convex space.}

Before giving Theorem \ref{FarkasMM}, we need some notation and reminders. Let $D$ be a  compact convex subset of  an l.c.t.v. space $E$.  We denote by $(\mathcal{C}(D),\|\cdot\|_{\infty})$ the Banach space of all real-valued continuous functions on $D$. By $\mathcal{C}^+(D)$ we denote the positive cone of $\mathcal{C}(D)$ which has a nonempty interior and $\mathcal{C}(D)^*$ denotes the topological dual of $\mathcal{C}(D)$. The space $\textnormal{Aff}(D)$ denotes the space of all affine continuous functions from $D$ to $\R$. We also recall (see \cite{Ru}) that the dual space $\mathcal{C}(D)^{\ast}$ is naturally identified with the Radon measures on $D$ via the duality map
\begin{eqnarray*} \label{eq:duality map}
\langle \mu,f\rangle=\int_{D}fd\mu,\quad \text{for all }\mu \in \mathcal{C}
(D)^{\ast}\text{ and }f\in \mathcal{C}(D). 
\end{eqnarray*}
In particular, the evaluation map $\delta_{x}: f\mapsto f(x)$ is the Dirac measure of $x\in D$ and we have:
\[
\delta_{x}(f):=\langle \delta_{x},f\rangle=f(x).
\]
Furthermore, the dual norm $||\mu||_{\ast}$ coincides with the total variation
of the measure $\mu$ denoted $\|\mu\|_{TV}.$\smallskip

We denote by $\mathcal{M}^{1}(D)$ the set of all Borel probability measures on
$D$. This set is a $w^{\ast}$-compact convex subset of $\mathcal{C}(D)^{\ast}$
and coincides with the weak$^{\ast}$ closed convex hull of the set
$\delta(D):=\lbrace \delta_x : x\in D\rbrace$, that is,
\begin{eqnarray} \label{eq:M1(D)}
\mathcal{M}^{1}(D)&=&\left \{  \mu \in \mathcal{C}(D)^{\ast}:\Vert \mu \Vert_{\ast}=\langle \mu,\boldsymbol{1}_D\rangle=1\right \}  \\
&=&\overline{\mathrm{conv}}^{w^{\ast}}\left(  \delta(D)\right),\nonumber
\end{eqnarray}
where $\boldsymbol{1}_D(x)=1,$ for all $x\in D.$

\vskip5mm
If $w\in E$ and $K\subset E$ is a nonempty set, we denote $[w,K]$ the convex hull of the point $w$ and the set $K$, that is, $[w,K]=\lbrace \lambda w +(1-\lambda)x: \lambda \in [0,1], x\in K \rbrace$.
\begin{lemma} \label{lemmacompact} Let $E$ be a l.c.t.v  space, $w\in E$ and $(K_n)_{n\geq1}$ be a non-increasing sequence of nonempty compact sets. Then, $\cap_{n\geq 1} [w, K_n]=[w, \cap_{n\geq 1} K_n]$.
\end{lemma}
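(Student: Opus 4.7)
The easy inclusion $[w,\cap_{n\geq 1}K_n]\subseteq \cap_{n\geq 1}[w,K_n]$ is immediate from $\cap_{n\geq 1}K_n\subseteq K_n$ for every $n$: any convex combination $\lambda w+(1-\lambda)x$ with $x\in \cap_n K_n$ certainly lies in $[w,K_n]$ for every $n$.

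For the reverse inclusion, fix $z\in \cap_{n\geq 1}[w,K_n]$. The goal is to produce a single pair $(\lambda,x)\in[0,1]\times(\cap_n K_n)$ with $z=\lambda w+(1-\lambda)x$. Rather than extracting subsequences (which is delicate because $E$ is only assumed locally convex Hausdorff and may fail first countability), my plan is to apply the finite intersection property directly in the ambient compact space $[0,1]\times K_1$. Define, for each $n\geq 1$,
\[
A_n:=\bigl\{(\lambda,x)\in[0,1]\times K_n:\; z=\lambda w+(1-\lambda)x\bigr\}.
\]
Each $A_n$ is nonempty by the hypothesis $z\in[w,K_n]$; each $A_n$ is closed in $[0,1]\times K_1$ because it is the intersection of the closed set $[0,1]\times K_n$ with the preimage of $\{z\}$ under the continuous map $(\lambda,x)\mapsto \lambda w+(1-\lambda)x$; and the sequence $(A_n)$ is non-increasing, since $K_{n+1}\subseteq K_n$ forces $A_{n+1}\subseteq A_n$.

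Since $[0,1]\times K_1$ is compact in the product topology (with $[0,1]\subset\R$), each $A_n$ is compact, and the nested family $(A_n)_{n\geq 1}$ of nonempty compact sets has the finite intersection property. Hence $\bigcap_{n\geq 1}A_n\neq\emptyset$. Picking any $(\lambda,x)$ in this intersection, we have $x\in K_n$ for every $n$, i.e.\ $x\in\bigcap_{n\geq 1}K_n$, together with $z=\lambda w+(1-\lambda)x$, so $z\in[w,\cap_{n\geq 1}K_n]$, as desired. The only mild obstacle is to resist arguing by subsequential limits of the $(\lambda_n,x_n)$ and to instead bundle the data into the single compact space $[0,1]\times K_1$; once that is done the argument is a standard nested-compact-sets application.
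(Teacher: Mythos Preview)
Your proof is correct. The paper's own argument also exploits compactness in $[0,1]\times K_1$, but does so by picking, for each $n$, a pair $(\lambda_n,x_n)\in[0,1]\times K_n$ with $z=\lambda_n w+(1-\lambda_n)x_n$, then extracting a convergent \emph{subnet} $(\lambda_\alpha,x_\alpha)\to(\bar\lambda,\bar x)$ and checking that $\bar x\in\cap_n K_n$ and that $z=\bar\lambda w+(1-\bar\lambda)\bar x$ by joint continuity of $(\lambda,x)\mapsto\lambda w+(1-\lambda)x$. Your route via the finite intersection property on the nested closed sets $A_n\subset[0,1]\times K_1$ is a cleaner repackaging of the same compactness idea: it sidesteps nets entirely (which, as you note, is appropriate since $E$ need not be first countable) and delivers the desired pair directly, without having to argue separately that the subnet limit lands in $\cap_n K_n$. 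Both arguments are equally short; yours is marginally more self-contained.
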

\begin{proof} Clearly, $[w, \cap_{n\geq 1} K_n] \subset \cap_{n\geq 1} [w, K_n]$. Let $x\in \cap_{n\geq 1} [w, K_n]$, then for every $n\geq 1$ there exists $x_n \in K_n$ and $\lambda_n \in [0,1]$ such that $x=\lambda_n w+(1-\lambda_n) x_n$. Since $[0,1]$ is compact and $(K_n)_{n\geq1}$ is a non-increasing sequence of nonempty compact sets, there are subnets $(x_\alpha)$ and a $(\lambda_\alpha)$ converging respectively to some $\bar{x}\in \cap_{n\geq 1} K_n$ and $\bar{\lambda}\in [0,1]$. Thus, $x=\lim_{\alpha} (\lambda_\alpha w+(1-\lambda_\alpha) x_\alpha) =\bar{\lambda} w+(1-\bar{\lambda}) \bar{x}\in [w, \cap_{n\geq 1} K_n]$.
\end{proof}

Now, we give the proof of our first main result mentioned in the introduction.

\begin{theorem} \label{FarkasMM}  Let $E$ be a l.c.t.v space,  $\Omega$ be a nonempty open subset of $E$ and $F$ be a non empty subset of $E$. Let $f: \Omega \to \R$ be a function. Assume that  $\hat{x} \in \Omega$  is an optimal solution of the problem $(\mathcal{P})$:
\begin{equation*}
(\mathcal{P})
\left \{
\begin{array}
[c]{l}
\max f\\
x\in \Omega \\
x\in F
\end{array}
\right. 
\end{equation*}
that $f$ is Gateaux differentiable at $\hat{x}$ and that $F$ is weak-admissible at $\hat{x}$ and determined by $C$. Then, either $\hat{x}\in \textnormal{int}(F)$ and therefore  $d_G f(\hat{x})=0$, otherwize if $\hat{x}\in F\setminus \textnormal{int}(F)$, we have $0\in [d_G f(\hat{x}),\mathcal{T}_{C}(\hat{x})]$. That is, there exists $(\lambda^*,  \beta^*)\in  \R^+\times \R^+$  and  $x^*\in \mathcal{T}_{C}(\hat{x})$ such that

$(i)$ $(\lambda^*, \beta^*)\neq (0,0)$.

$(ii)$ $\lambda^*  d_G f(\hat{x}) + \beta^* x^*=0$.

If moreover, we assume that  $0\not \in \mathcal{T}_{C}(\hat{x})$ (in particular if $E$ is a normed space and $F$ is admissible at $\hat{x}$), then we can choose $\lambda^*=1$.
\end{theorem}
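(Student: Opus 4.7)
The plan is to dispose of the interior case trivially and, in the boundary case, argue by contradiction via a Hahn--Banach separation in $E^*$ equipped with the $w^*$-topology. If $\hat{x}\in\textnormal{int}(F)$, then $\hat{x}$ is an unconstrained local maximum of the Gateaux-differentiable function $f$, so $d_Gf(\hat{x})=0$ and $(\lambda^*,\beta^*)=(1,0)$ works. Assume now $\hat{x}\in F\setminus\textnormal{int}(F)$; by Proposition~\ref{lemmainf}, $\inf_{\phi\in C}\phi(\hat{x})=0$, and by Proposition~\ref{nonvide} the set $\mathcal{T}_C(\hat{x})$ is a nonempty $w^*$-compact convex subset of $E^*$.

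Suppose for contradiction that $0\notin[d_Gf(\hat{x}),\mathcal{T}_C(\hat{x})]$. This set is convex and $w^*$-compact, being the convex hull of a point and a convex $w^*$-compact set. Since the continuous dual of $(E^*,w^*)$ is $E$, Hahn--Banach produces $v\in E$ and $\eta>0$ with $\langle d_Gf(\hat{x}),v\rangle\geq\eta$ and $\langle x^*,v\rangle\geq\eta$ for every $x^*\in\mathcal{T}_C(\hat{x})$. The first key technical step is to propagate this lower bound from $\mathcal{T}_C(\hat{x})=\bigcap_{n\geq 1} D_n$, where $D_n:=\overline{\textnormal{conv}}^{w^*}\{d_G\phi(\hat{x}):\phi(\hat{x})\in[0,1/n],\,\phi\in C\}$, up to some $D_{n_0}$. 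The sets $D_n\cap\{x^*:\langle x^*,v\rangle\leq\eta/2\}$ form a decreasing family of $w^*$-compact subsets of the $w^*$-compact $D_1$ with empty intersection; the finite intersection property yields some $n_0$ with $\langle x^*,v\rangle>\eta/2$ for every $x^*\in D_{n_0}$, and in particular $\langle d_G\phi(\hat{x}),v\rangle>\eta/2$ whenever $\phi\in C$ satisfies $\phi(\hat{x})\in[0,1/n_0]$.

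I would then show that $\hat{x}+tv\in F$ for all sufficiently small $t>0$. Split $C$ into $C_1:=\{\phi\in C:\phi(\hat{x})>1/(2n_0)\}\subset S$ and $C_2:=\{\phi\in C:\phi(\hat{x})\in[0,1/(2n_0)]\}$. On $C_1$, the equi-lsc property of $S$ at $\hat{x}$, applied with $\varepsilon=1/(4n_0)$, gives a neighbourhood $\mathcal{O}$ of $\hat{x}$ on which $\phi(y)>\phi(\hat{x})-1/(4n_0)>1/(4n_0)>0$ for every $\phi\in C_1$. On $C_2$, equi-G-differentiability of $C$ in the direction $v$ yields a function $\rho(t)\to 0$ that is independent of $\phi\in C$, and $\phi(\hat{x}+tv)\geq\phi(\hat{x})+t\langle d_G\phi(\hat{x}),v\rangle-t\rho(t)\geq t(\eta/2-\rho(t))>0$ for $t$ small. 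Combining both cases and shrinking $t$ so that $\hat{x}+tv\in\Omega$ (possible since $\Omega$ is open), one obtains $\hat{x}+tv\in\Omega\cap F$. Gateaux differentiability of $f$ together with $\langle d_Gf(\hat{x}),v\rangle\geq\eta>0$ then give $f(\hat{x}+tv)>f(\hat{x})$ for small $t>0$, contradicting the optimality of $\hat{x}$.

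For the final assertion, if $0\notin\mathcal{T}_C(\hat{x})$ then $\lambda^*=0$ would force $\beta^*x^*=0$ with $\beta^*>0$, hence $0=x^*\in\mathcal{T}_C(\hat{x})$, which is absurd; so $\lambda^*>0$ and one rescales to $\lambda^*=1$. When $E$ is a normed space and $F$ is admissible at $\hat{x}$, condition~$(c)$ of Definition~\ref{def2} gives $0\notin\overline{\textnormal{conv}}^{w^*}\{d_G\phi(\hat{x}):\phi\in C\}\supset\mathcal{T}_C(\hat{x})$, so the extra hypothesis is automatic. The main obstacle I expect is the uniform passage from $\mathcal{T}_C(\hat{x})$ to $D_{n_0}$: this is where the $w^*$-intersection definition of $\mathcal{T}_C(\hat{x})$ is decisive, and where the $w^*$-compactness, equi-G-differentiability, and equi-lsc clauses of weak-admissibility combine to upgrade a pointwise infinitesimal estimate into a uniform bound over infinitely many constraints.
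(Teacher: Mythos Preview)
Your argument is correct and takes a genuinely different route from the paper's.

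The paper does not argue by contradiction on the final conclusion. Instead, for each $n$ it works in the Banach space $\mathcal{C}(D_n)\times\R$ (with $D_n$ $w^*$-compact), shows by the same feasibility/improvement argument that $(-\boldsymbol{1}_{D_n},-1)$ lies outside the closed cone $\overline{X+\mathcal{C}^+(D_n)\times\R^+}$ where $X=\{(\delta_u,\,d_Gf(\hat{x})(u)):u\in E\}$, separates by Hahn--Banach to obtain a nonnegative Radon measure $\mu_n^*$ on $D_n$ and a scalar $\lambda_n^*\geq 0$, and then invokes the existence of a barycenter $p_n\in D_n$ for the normalized measure to conclude $0\in[d_Gf(\hat{x}),D_n]$. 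A separate lemma (Lemma~\ref{lemmacompact}) is then used to pass from $\bigcap_n[d_Gf(\hat{x}),D_n]$ to $[d_Gf(\hat{x}),\bigcap_n D_n]=[d_Gf(\hat{x}),\mathcal{T}_C(\hat{x})]$.

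Your proof separates directly in $(E^*,w^*)$, uses the finite intersection property to descend from $\mathcal{T}_C(\hat{x})$ to a single $D_{n_0}$, and then runs the equi-G-differentiability/equi-lsc feasibility argument once. This is more elementary: it bypasses the Radon-measure/barycenter machinery and makes Lemma~\ref{lemmacompact} unnecessary. The paper's approach, by contrast, is more constructive in flavour (it produces the multiplier at each level $n$ rather than deriving a contradiction from its nonexistence) and isolates the passage to the limit in a clean compactness lemma, which may be of independent interest. Both proofs hinge on exactly the same three ingredients of weak-admissibility, so neither requires stronger hypotheses than the other.
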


\begin{proof} Clearly, $d_G f(\hat{x})=0$ if $\hat{x}\in \textnormal{int}(F)$. Suppose that $\hat{x}\in F\setminus \textnormal{int}(F)$. Then, by Proposition \ref{lemmainf}, we have  $\inf_{\xi \in C}\xi(\hat{x})=0$.  For each integer number $n\geq 1$, set $S_n=\lbrace \phi \in C: \phi(\hat{x}) > \frac{1}{n}\rbrace$ (may be an empty set) and $C_n=C\setminus S_n=\lbrace \phi \in C: 0\leq \phi(\hat{x})\leq \frac{1}{n}\rbrace\neq \emptyset$ (since $\inf_{\xi \in C} \xi(\hat{x})=0$). Clearly,  the sequence $(C_n)_{n\geq 1}$ is non-increasing.  Let us set $D_n=\overline{\textnormal{conv}}^{w^*}\lbrace d_G \phi(\hat{x}): \phi\in C_n\rbrace $, which is nonempty and $w^*$-compact convex subset of $E^*$ (the sequence $(D_n)_{n\geq1}$ is non-increasing). Notice that for each $u\in E$,  the evaluation mapping $\delta_u: p \in D_n \mapsto \langle p, u \rangle$,  is linear and $w^*$-continuous, thus $\delta_u\in \textnormal{Aff}(D_n)\subset C(D_n)$, where $D_n$ is equipped with the $w^*$-topology. Let us set $$X:=\lbrace (\delta_u, d_G f(\hat{x})(u)) : u\in E \rbrace\subset \mathcal{C}(D_n)\times \R,$$ which is a vector subspace and let us prove that $$(-\boldsymbol{1}_{D_n},-1)\not \in \overline{X+\mathcal{C}^+(D_n)\times \R^+},$$ where $\boldsymbol{1}_{D_n}$ denotes the constant function equal to $1$ on $D_n$ and the closure is taken in the Banach space $\mathcal{C}(D_n)\times \R$.  Suppose by contradiction that $ (-\boldsymbol{1}_{D_n},-1) \in \overline{X+\mathcal{C}^+(D_n)\times \R^+}$.  There exists $u \in E$ and $(h_0,z_0)\in  \mathcal{C}^+(D_n)\times \R^+$ such that 
\begin{eqnarray*}
\|h_0 +\boldsymbol{1}_{D_n} - \delta_u\|_{\infty} +|z_0+1 - d_G f(\hat{x})(u)|<\frac{1}{2}
\end{eqnarray*}
 It follows  that, for all $p \in D_n=\overline{\textnormal{conv}}^{w^*}\lbrace d_G \phi(\hat{x})): \phi\in C_n\rbrace$
\begin{eqnarray} \label{eq2110}
\langle p , u\rangle > h_0(p)+\boldsymbol{1}_D(p) - \frac{1}{2} \geq  \frac{1}{2}
\end{eqnarray}
\begin{eqnarray} \label{eq3110}
d_G f(\hat{x})(u) > z_0+\frac{1}{2}\geq   \frac{1}{2}.
\end{eqnarray}
 By assumption, $f$ is Gateaux differentiable at $\hat{x}$ and the family $C$ is equi-Gateaux differentiable at $\hat{x}$, then there exists $\delta_0>0$ such that for all $\delta \in (0, \delta_0)$, we have $\hat{x}+ \delta u\in \Omega$ and 
\begin{eqnarray*} \label{eq212}
\sup_{\phi \in C}|\frac{\phi (\hat{x}+ \delta u) -\phi(\hat{x}) - \delta  \langle d_G \phi (\hat{x}), u\rangle}{\delta}| &<& \frac{1}{4} \\
|\frac{f (\hat{x} + \delta u) -f (\hat{x})- \delta d_G f (\hat{x})(u)}{\delta}|< \frac{1}{4}
\end{eqnarray*}
Using  (\ref{eq2110}), (\ref{eq3110}) and the two last inequalities, we get  that for all $\delta \in (0, \delta_0)$, $\hat{x}+\delta u \in \Omega$  and  
\begin{eqnarray*}
\forall \phi \in C_n \subset C, \hspace{3mm} \phi (\hat{x}+ \delta u) &>& \delta (\langle d_G \phi (\hat{x}), u\rangle  -\frac{1}{4}) + \phi(\hat{x})\\
                                                                                 &>& \delta \frac{1}{4}>0 
\end{eqnarray*}
\begin{eqnarray*}
f(\hat{x} + \delta u) -f (\hat{x}) &>& \delta (d_G f(\hat{x})(u) -\frac{1}{4})\\
                                                     &>& \delta \frac{1}{4}  >  0.
\end{eqnarray*}
It follows that, for all $\delta\in (0,\delta_0)$ and for all $\phi \in C_n$, we have 
\begin{eqnarray}\label{eqfin} 
\hat{x} + \delta u \in \Omega,  \phi (\hat{x} + \delta u) >0 \textnormal{ and } f(\hat{x}+\delta u) -f (\hat{x}) >0.
\end{eqnarray}
 If $S_n=\emptyset$, then $C_n=C$ and so (\ref{eqfin}) contradicts the fact that $\hat{x}$ is an optimal solution of $(\mathcal{P})$. We will show that there is also a contradiction in the case where $S_n\neq \emptyset$. Indeed, since the family $S=\lbrace \phi \in C: \phi(\hat{x})\neq 0 \rbrace$ is equi-lsc at $\hat{x}$ (see Definition \ref{def1}), the same applies to  $S_n\subset S$, so there exists $\alpha_n>0$ such that: $\forall \delta\in (0,\alpha_n), \forall \phi \in S_n$
\begin{eqnarray}\label{mod}
\hat{x}+\delta u\in \Omega \textnormal{ and } \phi(\hat{x}+\delta u) &>& \phi(\hat{x})  -\frac{1}{n}>0.
\end{eqnarray}
The formulas (\ref{eqfin}) and (\ref{mod}) applied with $\delta \in (0,\min(\delta_0,\alpha_n))$, contradict also the fact that $\hat{x}$ is an optimal solution of $(\mathcal{P})$. Finally, we proved that $ (-\boldsymbol{1}_{D_n},-1) \not \in  \overline{X+\mathcal{C}^+(D_n)\times \R^+}$. By the Hahn-Banach theorem, (and using the fact that $\overline{X+\mathcal{C}^+(D_n)\times \R^+}$ is a cone) there exists a Radon measure $\mu^*_n$ and $\lambda^*_n \in \R$ such that $(\mu^*_n,\lambda^*_n)\neq (0,0)$ and 
\begin{eqnarray*} \label{sup}
 \int_D h d\mu^*_n +  \lambda^*_n z \geq 0, \hspace{3mm} \forall (h,z)\in X+\mathcal{C}^+(D_n)\times \R^+.
\end{eqnarray*}
This implies in particular that $\mu^*_n\geq 0$, $\lambda^*_n\geq 0$ and for all $u\in E$,
\begin{eqnarray}\label{for}
\lambda^*_n  d_G f(\hat{x})(u)+  \int_{D_n}  \delta_u d\mu^*_n=0.
\end{eqnarray}

{\it Case 1.} If there exists some $n_0\geq 1$ such that $\mu^*_{n_0}=0$, then $\lambda^*_{n_0}\neq 0$ (since $(\mu^*_{n_0},\lambda^*_{n_0})\neq (0,0)$) and so from $(\ref{for})$, we get that $d_G f(\hat{x})=0$ and in consequence the theorem works with $(\lambda^*, \beta^*)=(1,0)$.

{\it Case 2.}  Suppose that $\mu^*_n\neq 0$ for all $n\geq 1$. Then $\nu^*_n:=\frac{\mu^*_n}{\|\mu^*_n\|_{TV}}$ is  a Borel probability measure on the $w^*$-compact convex set $D_n$ and from $(\ref{for})$, we have  
\begin{eqnarray}\label{for1}
\frac{\lambda^*_n}{\lambda^*_n +\|\mu^*_n\|_{TV}}  d_G f(\hat{x})(u)+  \frac{\|\mu^*_n\|_{TV}}{\lambda^*_n +\|\mu^*_n\|_{TV}} \int_{D_n}  \delta_u d\nu^*_n =0.
\end{eqnarray}
 As a Borel probability measure,  $\nu^*_n$ has a unique barycenter $p_n\in D_n=\overline{\textnormal{conv}}^{w^*}\lbrace d_G\phi(\hat{x}): \phi\in C_n\rbrace$  (see for instance \cite[Chapitre IV, section 7, n$^\circ$1, Corollaire de Proposition 1.]{Bour} or \cite[Lemma 10]{ChM}), that is, $k(p_n)=\int_{D_n} k d \nu^*_n$, for every $k\in \textnormal{Aff}(D_n)$. In particular, since for every $u\in E$ the map  $\delta_u\in \textnormal{Aff}(D_n)$, we obtain $\int_{D_n} \delta_u d\nu^*_n =\langle p_n, u\rangle$. Using (\ref{for1}) and the fact that $\int_{D_n} \delta_u d\nu^*_n =\langle p_n, u\rangle$ for all $u\in E$, we get, 
\begin{eqnarray*}
0\in [d_G f(\hat{x}), D_n], \forall n\geq 1.
\end{eqnarray*}
Hence, $0\in \cap_{n\geq 1} [d_G f(\hat{x}), D_n]=[d_G f(\hat{x}),\cap_{n\geq 1}D_n]$ by using Lemma \ref{lemmacompact}, since $(D_n)_{n\geq 1}$ is a non-increasing sequence of $w^*$-compact subsets. Equivalently, there exists $(\lambda^*,  \beta^*)\in  \R^+\times \R^+$  and $x^*\in \cap_{n\geq 1}D_n:=  \mathcal{T}_{C}(\hat{x})$ such that 

$(i)$ $(\lambda^*, \beta^*)\neq (0,0)$.

$(ii)$ $\lambda^*  d_G f(\hat{x}) +  \beta^* x^*=0 $.

\noindent Finally, we see from $(i)$ and $(ii)$ that if $\lambda^*=0$ then $\beta^* \neq 0$ and so $0 \in \mathcal{T}_{C}(\hat{x})$. Thus, if $0 \not \in \mathcal{T}_{C}(\hat{x})$ then $\lambda^*\neq 0$ and so, dividing by $\lambda^*$ we can assume that $\lambda^*=1$ (in particular in a normed space if $F$ is admissible at $\hat{x}$, we have by Example \ref{nonzero} that $0 \not \in \mathcal{T}_{C}(\hat{x})$). 
\end{proof}

We show in Example \ref{ExN1} below, that the set $\mathcal{T}_{C}(\hat{x})$ in Theorem \ref{FarkasMM} cannot, in general, be replaced by  the set $\overline{\textnormal{conv}}^{w^*}\lbrace d_G\phi(\hat{x}): \phi(\hat{x})=0, \phi\in C\rbrace $ even in $\R^2$. 

\begin{Exemp} \label{ExN1} Indeed, in $E=\R^2$, let $\Omega=]-1,1[\times ]-1,1[$, $C=\lbrace \phi_k:  k\geq 0 \rbrace$, where $\phi_0(x,y)=x$,  and for all $k\geq 1$, $\phi_k(x,y)=y+\frac{1}{k}$ and $f(x,y)= -x^2-y$ for all $(x,y)\in \Omega$. We set $F:=[C]^\times=\R^+\times \R^+$. In this case, the problem $(\mathcal{P})$ has $(0,0)$ as a solution, $F$ is clearly weak-admissible at $(0,0)\in F\setminus \textnormal{int}(F)$. We have $\phi_0(0,0)=0$,  $\phi_k(0,0)=\frac{1}{k}>0$ for all $k\geq 1$, $\phi'_0(0,0)=(1,0)$, $\phi'_k(0,0)=(0,1)$ for all $k\geq 1$ and $f'(0,0)=(0,-1)$. We see that $$\textnormal{conv}\lbrace \phi'(0,0): \phi(0,0)=0, \phi\in C\rbrace =\lbrace (1,0) \rbrace,$$ 
$$\mathcal{T}_{C}(\hat{x}):=\cap_{n\geq 1} \textnormal{conv}\lbrace \phi'(0,0): \phi(0,0)\in [0, \frac{1}{n}], \phi\in C\rbrace =[(1,0), (0,1)],$$ 
 $$(0,0) \not \in [(0,-1), (1,0) ]=[f'(0,0), \phi'_0(0,0)],$$ but clearly, $(0,0)\in [(0,-1), \mathcal{T}_{C}(\hat{x})]=[f'(0,0), \mathcal{T}_C(\hat{x})]$.
\end{Exemp}

\begin{remark}In a normed vector space, the condition of $w^*$-compactness of $\overline{\textnormal{conv}}^{w^*}\lbrace d_G \phi(\hat{x}): \phi\in C\rbrace $  in the definition of weak-admissibility of a set $F$ can  be omitted by replacing the set $C$ by the set $\widetilde{C}:= \lbrace \frac{\phi}{\max(1,\|d_G \phi(\hat{x})\|)}:\phi \in C \rbrace$ and $\mathcal{T}_{C}(\hat{x})$ by $\mathcal{T}_{\widetilde{C}}(\hat{x})$, since $C$ and $\widetilde{C}$ determine the same set $F$, that is, $F=[C]^\times =[\widetilde{C}]^\times$ and the set $\overline{\textnormal{conv}}^{w^*}\lbrace d_G \phi(\hat{x}): \phi\in \widetilde{C}\rbrace $ is $w^*$-compact.
\end{remark}

An application to semi-infinite programming (SIP) problems is given in the following corollary. Some general literature on semi-inﬁnite programming problems can be found in \cite{Cz, HK, Sh, RG, LS}. Recall that $\nabla_x h(\hat{x}, t)$ denotes the Gateaux-differential of $h(\cdot, t)$ at $\hat{x}$ and $T(\hat{x}):=\lbrace t\in T: h(\hat{x},t)=0\rbrace$.

\begin{corollary} \label{semi-finite} Let $T$ be a nonempty Hausdorff compact topological space and $h: \R^p\times T\to \R$, $f:\R^p\to \R$ be functions. Assume that $\hat{x}\in \R^p$ is an optimal solution of the problem:
\begin{equation*}
(\mathcal{P})
\left \{
\begin{array}
[c]{l}
\max f\\
x \in \Omega \\
h(x,t)\geq 0, \forall t\in T
\end{array}
\right. 
\end{equation*}
and that:

$(a)$ $T(\hat{x})\neq \emptyset$.

$(b)$ the function $f$ is Gateaux differentiable at $\hat{x}$ and family $(h(\cdot, t))_{t\in T}$ is equi-Gateaux-differentiale at $\hat{x}$, 

$(c)$ the family $(h(\cdot,t))_{t\in T\setminus T(\hat{x})}$ is equi-lsc at $\hat{x}$, 

$(d)$ the functions $t\mapsto h(\hat{x}, t)$ and $t\mapsto \nabla_x h(\hat{x}, t)$ are continuous.

\noindent Then,  there exists $\lambda_i\geq 0$, $t_i\in T(\hat{x})$, $i=0,...,k$ such that $k\leq p$, $\sum_{i=0}^k\lambda_i=1$ and 
$$\lambda_0 d_G f(\hat{x})+\sum_{i=1}^k\lambda_i \nabla_x h(\hat{x}, t_i)=0.$$
If moreover, we assume that  $0\not \in \textnormal{conv}\lbrace \nabla_x h(\hat{x}, t), t\in T(\hat{x})\rbrace$, then $\lambda_0\neq 0$.
\end{corollary}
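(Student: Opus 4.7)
The plan is to deduce this from Theorem \ref{FarkasMM} applied to $F := [C]^{\times}$ with $C := \{h(\cdot, t) : t \in T\}$, then invoke Example \ref{ExN2bis} to obtain an explicit formula for $\mathcal{T}_{C}(\hat{x})$, and finally use Carath\'eodory's theorem in $\R^{p}$ to reduce the number of multipliers.

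First I would verify that $F$ is weak-admissible at $\hat{x}$ in the sense of Definition \ref{def1}, with determining family $C$. Condition (a) holds by construction. Condition (b) is exactly hypothesis (b) of the corollary. Since $\{\phi \in C : \phi(\hat{x}) \neq 0\} = \{h(\cdot, t) : t \in T \setminus T(\hat{x})\}$, condition (c) is hypothesis (c). For condition (d), hypothesis (d) together with compactness of $T$ makes $\{\nabla_x h(\hat{x}, t) : t \in T\}$ a compact subset of $\R^{p}$, so its convex hull is norm-compact, hence $w^{*}$-compact (the two topologies coincide in finite dimension). Moreover, hypotheses (i)--(iii) of Example \ref{ExN2bis} are precisely (b), (d), (a) of the corollary, so that example yields
$$\mathcal{T}_{C}(\hat{x}) = \textnormal{conv}\{\nabla_x h(\hat{x}, t) : t \in T(\hat{x})\}.$$

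Now Theorem \ref{FarkasMM} applies. If $\hat{x} \in \textnormal{int}(F)$, then $d_G f(\hat{x}) = 0$ and the conclusion is immediate with $k = 0$, $\lambda_0 = 1$, and any $t_0 \in T(\hat{x})$ (nonempty by (a)). Otherwise, there exist $(\lambda^{*}, \beta^{*}) \in \R^{+} \times \R^{+} \setminus \{(0,0)\}$ and $x^{*} \in \mathcal{T}_{C}(\hat{x})$ with $\lambda^{*} d_G f(\hat{x}) + \beta^{*} x^{*} = 0$. Writing $x^{*} = \sum \mu_{i} \nabla_x h(\hat{x}, t_{i})$ with $t_{i} \in T(\hat{x})$, $\mu_{i} \geq 0$, $\sum \mu_{i} = 1$, and normalizing by $\lambda^{*} + \beta^{*} > 0$, the origin is exhibited as a convex combination of $d_G f(\hat{x})$ together with finitely many $\nabla_x h(\hat{x}, t_{i})$. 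Carath\'eodory's theorem in $\R^{p}$ then lets one rewrite this combination using at most $p+1$ vectors in total, which gives the required form with $k \leq p$. For the KKT refinement, the hypothesis $0 \notin \textnormal{conv}\{\nabla_x h(\hat{x}, t) : t \in T(\hat{x})\}$ is exactly $0 \notin \mathcal{T}_{C}(\hat{x})$, so Theorem \ref{FarkasMM} allows $\lambda^{*} = 1$, and after normalization $\lambda_0 = 1/(1 + \beta^{*}) > 0$. The only real work is bookkeeping around the normalization and the Carath\'eodory reduction; the substantive analytic content is entirely absorbed into Theorem \ref{FarkasMM} and Example \ref{ExN2bis}.
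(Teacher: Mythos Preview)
Your proposal is correct and follows essentially the same route as the paper: verify weak-admissibility of $F=[C]^\times$ with $C=\{h(\cdot,t):t\in T\}$, invoke Example~\ref{ExN2bis} to identify $\mathcal{T}_C(\hat{x})$, apply Theorem~\ref{FarkasMM}, and finish with Carath\'eodory's theorem in $\R^p$. The only difference is that you spell out the verification of Definition~\ref{def1} and the normalization step in more detail than the paper does.
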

\begin{proof} Set $F:=[C]^\times:=\lbrace x\in \R^p: h(x,t)\geq 0, \forall t\in T\rbrace$, where $C=\lbrace h(\cdot,t): t\in T\rbrace$. From our hypothesis, the set $F$ is weak-admissible at $\hat{x}$ and by Example \ref{ExN2bis}, $\mathcal{T}_{C}(\hat{x})=\textnormal{conv}\lbrace \nabla_x h(\hat{x}, t): t\in T(\hat{x})\rbrace.$ Using Theorem \ref{FarkasMM}, we have that either $d_G f(\hat{x})=0$ or $0\in [d_G f(\hat{x}),\mathcal{T}_{C}(\hat{x})]\subset \R^p$. If $d_G f(\hat{x})=0$, then the corollary works with $\lambda_0=1$ and $\lambda_i=0$ for $1\leq i\leq k$ and $k\leq p$. If $0\in [d_G f(\hat{x}),\mathcal{T}_{C}(\hat{x})]$, then by Carathéodory's theorem there are $\lambda_i\geq 0$, $t_i\in T(\hat{x})$, $i=0,...,k$ such that $k\leq p$, $\sum_{i=0}^k\lambda_i=1$ and 
$\lambda_0 d_G f(\hat{x})+\sum_{i=1}^k\lambda_i \nabla_x h(\hat{x}, t_i)=0$. Finally, it is clear that if $0\not \in \textnormal{conv}\lbrace \nabla_x h(\hat{x}, t), t\in T(\hat{x})\rbrace$, then $\lambda_0\neq 0$.
\end{proof}
\begin{corollary} \label{ExN5} Let $E=Y$ be a normed vector space and $C$ be a nonempty  relatively compact subset of $(C_b^{1\alpha} (X),\|\cdot\|_{1,\alpha})$.  Let $\Omega$ be a nonempty open subset of $Y$, $\hat{x}\in \Omega$ and $f: \Omega \to \R$ be a Gateaux differentiable function. Suppose that $\hat{x}$ is a solution of the problem
\begin{equation*}
(\mathcal{P})
\left \{
\begin{array}
[c]{l}
\max f\\
x\in \Omega \\
\phi(x)\geq 0, \forall \phi \in C.
\end{array}
\right. 
\end{equation*}
 Then, either $d_G f(\hat{x})=0$ or there exists  $(\lambda^*,  \beta^*)\in  \R^+\times \R^+$  and $\psi \in \overline{\textnormal{conv}}^{\|\cdot\|_{1,\alpha}}(C)$ such that,

$(i)$ $(\lambda^*, \beta^*)\neq (0,0)$.

$(ii)$ $\lambda^*  d_G f(\hat{x})+ \beta^* d_G\psi(\hat{x})=0$,

$(iii)$ $\psi(\hat{x})=0$.

\noindent If moreover, we assume that  $0\not \in \lbrace d_G \phi(\hat{x}): \phi(\hat{x})=0, \phi \in \overline{\textnormal{conv}}^{\|\cdot\|_{1,\alpha}}(C)\rbrace$, then we can take $\lambda^* =1$.
\end{corollary}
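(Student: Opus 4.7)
The plan is to apply Theorem \ref{FarkasMM} to $F := [C]^\times$ and then lift the abstract multiplier $x^\ast \in \mathcal{T}_C(\hat{x})$ to the Gateaux differential of a single $\psi$ in the $\|\cdot\|_{1,\alpha}$-closed convex hull of $C$, using Mazur's theorem to keep this hull norm-compact.

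First I would verify that $F$ is weak-admissible at $\hat{x}$ in the sense of Definition \ref{def1}. Relative compactness of $C$ in $(C_b^{1,\alpha}(X),\|\cdot\|_{1,\alpha})$ entails boundedness, so Proposition \ref{EFrech} provides all the required ingredients: equi-G-differentiability of $C$ at $\hat{x}$, $r$-equi-Lipschitzness at $\hat{x}$ (which makes the subfamily $\{\phi\in C:\phi(\hat{x})\neq 0\}$ equi-lsc at $\hat{x}$, since an $r$-Lipschitz family on a ball around $\hat{x}$ is automatically equi-continuous there), and $w^\ast$-compactness of $\overline{\textnormal{conv}}^{w^\ast}\{d_G\phi(\hat{x}):\phi\in C\}$. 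Theorem \ref{FarkasMM} then gives either $\hat{x}\in\textnormal{int}(F)$ (whence $d_G f(\hat{x})=0$), or a pair $(\lambda^\ast,\beta^\ast)\in\R^+\times\R^+\setminus\{(0,0)\}$ together with $x^\ast\in\mathcal{T}_C(\hat{x})$ satisfying $\lambda^\ast d_G f(\hat{x})+\beta^\ast x^\ast=0$.

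Next, I would promote such an $x^\ast$ to $d_G\psi(\hat{x})$ for some $\psi\in K:=\overline{\textnormal{conv}}^{\|\cdot\|_{1,\alpha}}(C)$ with $\psi(\hat{x})=0$, by redoing the argument of Example \ref{ExN2}(ii) in the stronger norm $\|\cdot\|_{1,\alpha}$. By Mazur's theorem in the Banach space $(C_b^{1,\alpha}(X),\|\cdot\|_{1,\alpha})$, the set $K$ is itself norm-compact. The evaluations $\psi\mapsto\psi(\hat{x})$ and $\delta'_{\hat{x}}:\psi\mapsto d_G\psi(\hat{x})$ are continuous linear on $(C_b^{1,\alpha}(X),\|\cdot\|_{1,\alpha})$ (the second because $\|d_G\psi(\hat{x})\|\leq\|\psi\|_L\leq\|\psi\|_{1,\alpha}$), so each convex set $Q_n:=\{\psi\in K:\psi(\hat{x})\in[0,1/n]\}$ is norm-compact and its image $\delta'_{\hat{x}}(Q_n)$ is a norm-compact, hence $w^\ast$-closed, convex subset of $Y^\ast$. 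Repeating the chain of inclusions of Example \ref{ExN2}(ii) with $K$ in place of $\overline{\textnormal{conv}}^{\|\cdot\|_G}(C)$ then yields $\mathcal{T}_C(\hat{x})\subset\bigcap_{n\geq 1}\delta'_{\hat{x}}(Q_n)$, and extracting a $\|\cdot\|_{1,\alpha}$-convergent subsequence from a family of preimages $(\psi_n)\subset K$ of $x^\ast$ produces the desired $\psi\in K$ with $\psi(\hat{x})=0$ and $d_G\psi(\hat{x})=x^\ast$.

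Finally, if $0\notin\{d_G\phi(\hat{x}):\phi\in K,\ \phi(\hat{x})=0\}$, then the $\psi$ just produced satisfies $d_G\psi(\hat{x})\neq 0$, so $\lambda^\ast=0$ would force $\beta^\ast\neq 0$ and hence $d_G\psi(\hat{x})=0$, a contradiction; therefore $\lambda^\ast>0$ and we rescale to $\lambda^\ast=1$. The main obstacle is the second step: Example \ref{ExN2}(ii) as stated gives $\psi$ only in the weaker $\|\cdot\|_G$-closed hull, and the real work is to invoke Mazur's theorem in $(C_b^{1,\alpha}(X),\|\cdot\|_{1,\alpha})$ to upgrade the conclusion to the $\|\cdot\|_{1,\alpha}$-closed hull without losing norm-compactness of the intermediate sets $Q_n$.
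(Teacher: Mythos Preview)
Your proposal is correct and follows the paper's route: weak-admissibility of $F=[C]^\times$ via Proposition~\ref{EFrech}, then Theorem~\ref{FarkasMM}, then the Example~\ref{ExN2}(ii) argument to realize $x^*\in\mathcal{T}_C(\hat{x})$ as $d_G\psi(\hat{x})$ with $\psi(\hat{x})=0$. You are in fact more careful than the paper's one-line proof, which simply cites Example~\ref{ExN2}(ii) and leaves implicit the adaptation (via Mazur's theorem in the Banach space $C_b^{1,\alpha}(X)$ and continuity of $\delta'_{\hat{x}}$ for $\|\cdot\|_{1,\alpha}$) from the $\|\cdot\|_G$-closed hull stated there to the $\|\cdot\|_{1,\alpha}$-closed hull claimed in the corollary.
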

\begin{proof} Using Proposition \ref{EFrech}, we see that $F=[C]^\times$ is weak-admissible at each point, then we apply Theorem \ref{FarkasMM} and part $(ii)$ of Example \ref{ExN2}.
\end{proof}

In order to deal with problems of the form
\begin{equation*}
(\mathcal{P})
\left \{
\begin{array}
[c]{l}
\max f\\
x\in \Omega \\
g(x) \in A
\end{array}
\right. 
\end{equation*}
we need the following lemma.
\begin{lemma}\label{Gateaux} Let $E$ be an l.c.t.v.  space, $\Omega$ an open subset of $E$ and  $(Y,\|\cdot\|)$ be a normed space. Let  $g: \Omega \to Y$ be a Gateaux differentiable and continuous function at $\hat{x}\in \Omega$. Let $C$ be a family of  functions from $Y$ into $\R$, $r$-equi-Lipschitz ($r\geq 0$) at $g(\hat{x})$ and equi-Gateaux differentiable at $g(\hat{x})$. Then,

$(i)$ the set $\overline{\textnormal{conv}}^{w^*}\lbrace d_G\phi(g(\hat{x})): \phi\in C\rbrace$,  is a $w^*$-compact  subset of $Y^*$. 

$(ii)$  the family $\lbrace \phi\circ g: \phi \in C \rbrace$ is equi-Gateaux differentiable at $\hat{x}$  and $d_G(\phi\circ g)(\hat{x})=d_G\phi(g(\hat{x}))\circ d_G g(\hat{x})$ for all $\phi \in C$.
\end{lemma}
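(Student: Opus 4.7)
The plan is to handle the two parts separately, with part (i) essentially an application of Banach--Alaoglu and part (ii) a splitting of the increment into two error terms, one controlled by the equi-Lipschitz hypothesis combined with the continuity of $g$, and the other by the equi-G-differentiability of $C$ at $g(\hat{x})$.

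For part (i), I would first note that since every $\phi\in C$ is $r$-Lipschitz on a neighbourhood of $g(\hat{x})$ and Gateaux differentiable at $g(\hat{x})$, the mean value estimate gives $\|d_G\phi(g(\hat{x}))\|\leq r$ for all $\phi\in C$. Thus $\{d_G\phi(g(\hat{x})):\phi\in C\}\subset B_{Y^*}(0,r)$, which is $w^*$-compact by Banach--Alaoglu; its $w^*$-closed convex hull is a $w^*$-closed subset of this compact set and is therefore $w^*$-compact.

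For part (ii), fix $v\in E$ and write $w:=d_G g(\hat{x})(v)\in Y$. The composition $d_G\phi(g(\hat{x}))\circ d_G g(\hat{x}):E\to\mathbb{R}$ is linear and continuous as a composition of linear continuous maps, and its value at $v$ is $\langle d_G\phi(g(\hat{x})),w\rangle$. I would then split, for $t>0$ small,
\begin{eqnarray*}
\frac{\phi(g(\hat{x}+tv))-\phi(g(\hat{x}))-t\langle d_G\phi(g(\hat{x})),w\rangle}{t}
&=& \frac{\phi(g(\hat{x}+tv))-\phi(g(\hat{x})+tw)}{t} \\
& & +\;\frac{\phi(g(\hat{x})+tw)-\phi(g(\hat{x}))-t\langle d_G\phi(g(\hat{x})),w\rangle}{t}.
\end{eqnarray*}
The second term tends to $0$ as $t\searrow0$ uniformly in $\phi\in C$, directly by the equi-G-differentiability of $C$ at $g(\hat{x})$ applied in the direction $w$. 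For the first term, Gateaux differentiability of $g$ at $\hat{x}$ yields $\|g(\hat{x}+tv)-g(\hat{x})-tw\|=o(t)$; together with the continuity of $g$ at $\hat{x}$ (so that $g(\hat{x}+tv)$ and $g(\hat{x})+tw$ both lie, for $t$ small, in the ball around $g(\hat{x})$ on which every $\phi\in C$ is $r$-Lipschitz), we obtain
\[
|\phi(g(\hat{x}+tv))-\phi(g(\hat{x})+tw)|\leq r\,\|g(\hat{x}+tv)-g(\hat{x})-tw\|=o(t),
\]
uniformly in $\phi\in C$. Dividing by $t$ and taking the supremum over $\phi$ gives the required uniform limit, which simultaneously identifies $d_G(\phi\circ g)(\hat{x})$ with $d_G\phi(g(\hat{x}))\circ d_G g(\hat{x})$.

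The only subtle point is the uniformity in $\phi$ of the Lipschitz estimate, which is why the statement assumes equi-Lipschitz (and not merely pointwise Lipschitz) behaviour of $C$ at $g(\hat{x})$, and why the continuity of $g$ at $\hat{x}$ is invoked to ensure that for all sufficiently small $t>0$ the points $g(\hat{x}+tv)$ and $g(\hat{x})+tw$ lie in the common ball of equi-Lipschitzness.
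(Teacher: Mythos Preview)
Your proof is correct and follows essentially the same approach as the paper: the same two-term splitting of the increment for part~(ii), with one term handled by the $r$-equi-Lipschitz hypothesis together with $\|g(\hat{x}+tv)-g(\hat{x})-tw\|=o(t)$, and the other by equi-Gateaux differentiability of $C$ in the direction $w=d_G g(\hat{x})(v)$; part~(i) is likewise reduced to the inclusion in $B_{Y^*}(0,r)$ and Banach--Alaoglu.
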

\begin{proof} Since $C$ is  equi-Gateaux differentiable at $g(\hat{x})$, then for  every $y \in Y$ and every $\varepsilon >0$, there exists $\delta>0$ such that for all $t\in ]0,\delta[$
\begin{eqnarray*}
\sup_{\phi \in C} \left |\frac{\phi(g(\hat{x})+ t y) - \phi(g(\hat{x}))- t d_G \phi(g(\hat{x}))(y)}{t} \right |<\varepsilon.
\end{eqnarray*}
With $y=d_G g(\hat{x})(u)\in Y$ for $u\in E$, we have that for  every $u \in E$ and every $\varepsilon >0$, there exists $\delta>0$ such that for all $t\in ]0,\delta[$
\begin{eqnarray*}
\alpha:=\sup_{\phi \in C} \left |\frac{\phi(g(\hat{x})+ t d_G g(\hat{x})(u)) - \phi(g(\hat{x}))- t d_G \phi(g(\hat{x}))(d_G g(\hat{x})(u))}{t} \right |<\varepsilon.
\end{eqnarray*}
Set
\begin{eqnarray*}
\beta &:=& \sup_{\phi \in C} \left |\frac{\phi\circ g(\hat{x}+tu) - \phi\circ g(\hat{x})- t d_G\phi(g(\hat{x}))(d_G g(\hat{x})(u))}{t} \right |.
\end{eqnarray*}
Using the triangular inequality, the continuity of $g$ at $\hat{x}$ and the fact that  $C$ is $r$-equi-Lipschitz at $g(\hat{x})$, we get that for $t>0$ small enough
\begin{eqnarray*}
\beta &<& \sup_{\phi \in C} \left |\frac{\phi (g(\hat{x}+tu)) - \phi(g(\hat{x})+ t d_G g(\hat{x})(u))}{t} \right | +\alpha \\
                                                                                                                                                               &<&  r \left \|\frac{g(\hat{x}+tu)-g(\hat{x})- t d_G g(\hat{x})(u)}{t} \right\|_Y +\varepsilon.
\end{eqnarray*}
Since $g$ is Gateaux differentiable at $\hat{x}$, we get using the above inequality and the expression of $\beta$ that the family $\lbrace \phi \circ g: \phi \in C\rbrace$ is equi-Gateaux differentiable at $\hat{x}$ and that $d_G (\phi\circ g)(\hat{x})=d_G\phi(g(\hat{x})) \circ d_G g(\hat{x})$ for all $\phi \in C$. On the other hand, since $C$ is $r$-equi-Lipschitz at $g(\hat{x})$, then we see easily that $\overline{\textnormal{conv}}^{w^*}\lbrace d_G \phi(g(\hat{x})): \phi\in C\rbrace \subset B_{Y^*}(0,r)$.  It follows that $\overline{\textnormal{conv}}^{w^*}\lbrace d_G \phi(g(\hat{x})): \phi\in C\rbrace$,  is a $w^*$-compact  subset of $Y^*$. This ends the proof.
\end{proof}

\begin{proposition} \label{FarkasM}  Let $E$  be an  l.c.t.v space,  $\hat{x}\in \Omega$ be an open subset of $E$ and $Y$ be a normed space. Let $g: \Omega \to Y$ and  $f: \Omega \to \R$ be two  mappings Gateaux differentiable at $\hat{x}$ and $g$ continuous at $\hat{x}$. Let  $A:=[C]^{\times}$ be an admissible set at $g(\hat{x})$ determined by a family $C$ of functions from $Y$ into $\R$.  Assume that  $\hat{x} \in \Omega$  is an optimal solution of the problem $(\mathcal{P})$:
\begin{equation*}
(\mathcal{P}_g)
\left \{
\begin{array}
[c]{l}
\max f\\
x\in \Omega \\
g(x) \in A.
\end{array}
\right. 
\end{equation*}

Then,  there exists $(\lambda^*, \beta^*)\in  \R^+\times \R^+ $ and $y^*\in \mathcal{T}_{C}(g(\hat{x}))$  such that 

$(a)$ $(\lambda^*, \beta^*)\neq (0,0)$, $y^*\neq 0$.

$(b)$ $\lambda^*  d_G f(\hat{x}) + \beta^* y^*\circ d_G g(\hat{x})=0$.

\end{proposition}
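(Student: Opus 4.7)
The plan is to reduce the proposition to the already-proved Theorem \ref{FarkasMM} by pulling back the constraints along $g$. Set $\tilde{C}:=\{\phi\circ g : \phi\in C\}$ and $F:=\{x\in\Omega : g(x)\in A\}=[\tilde{C}]^{\times}\cap\Omega$, so that $\hat{x}$ is an optimal solution of the problem $\max\{f(x):x\in\Omega,\ x\in F\}$. It suffices to check that $F$ is weak-admissible at $\hat{x}$ determined by $\tilde{C}$, then apply Theorem \ref{FarkasMM}, and finally translate the conclusion in $\mathcal{T}_{\tilde{C}}(\hat{x})\subset E^{*}$ back into a statement about $\mathcal{T}_{C}(g(\hat{x}))\subset Y^{*}$.

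To verify weak-admissibility (Definition \ref{def1}): $F=[\tilde{C}]^{\times}$ holds by construction; equi-Gateaux-differentiability of $\tilde{C}$ at $\hat{x}$, together with the chain rule $d_G(\phi\circ g)(\hat{x})=d_G\phi(g(\hat{x}))\circ d_G g(\hat{x})$, comes directly from Lemma \ref{Gateaux}(ii); the equi-lower semicontinuity of $\{\phi\circ g : \phi(g(\hat{x}))\neq 0\}$ at $\hat{x}$ follows from the $r$-equi-Lipschitz property of $C$ at $g(\hat{x})$ (given by admissibility) combined with continuity of $g$ at $\hat{x}$, which yields equi-continuity of the composed family; finally, introducing the $w^{*}$-$w^{*}$-continuous transpose $T:Y^{*}\to E^{*}$ defined by $T(y^{*}):=y^{*}\circ d_G g(\hat{x})$, the $w^{*}$-compactness of $\overline{\mathrm{conv}}^{w^{*}}\{d_G(\phi\circ g)(\hat{x}):\phi\in C\}$ in $E^{*}$ is obtained as the continuous linear image of the $w^{*}$-compact convex set $\overline{\mathrm{conv}}^{w^{*}}\{d_G\phi(g(\hat{x})):\phi\in C\}\subset Y^{*}$ furnished by Lemma \ref{Gateaux}(i).

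Applying Theorem \ref{FarkasMM} now gives either $d_G f(\hat{x})=0$ (when $\hat{x}\in\mathrm{int}(F)$) or $(\lambda^{*},\beta^{*})\in\mathbb{R}^{+}\times\mathbb{R}^{+}\setminus\{(0,0)\}$ and $x^{*}\in\mathcal{T}_{\tilde{C}}(\hat{x})$ with $\lambda^{*}d_G f(\hat{x})+\beta^{*}x^{*}=0$. The key identification, and the main technical obstacle, is $\mathcal{T}_{\tilde{C}}(\hat{x})=T(\mathcal{T}_{C}(g(\hat{x})))$. Writing $D_{n}:=\overline{\mathrm{conv}}^{w^{*}}\{d_G\phi(g(\hat{x})):\phi(g(\hat{x}))\in[0,1/n],\ \phi\in C\}$, one shows first, using linearity and $w^{*}$-$w^{*}$-continuity of $T$ plus the fact that $T$ sends $w^{*}$-compacts to $w^{*}$-compacts, that $T(D_{n})=\overline{\mathrm{conv}}^{w^{*}}\{d_G(\phi\circ g)(\hat{x}):(\phi\circ g)(\hat{x})\in[0,1/n]\}$, so $\mathcal{T}_{\tilde{C}}(\hat{x})=\bigcap_{n}T(D_{n})$. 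The inclusion $\bigcap_{n}T(D_{n})\subset T(\bigcap_{n}D_{n})$ is then handled by a nested-compact-sets argument: for $x^{*}\in\bigcap_{n}T(D_{n})$, the sets $D_{n}\cap T^{-1}(\{x^{*}\})$ form a decreasing sequence of nonempty $w^{*}$-closed subsets of the $w^{*}$-compact $D_{1}$, so have nonempty intersection by the finite intersection property, yielding $y^{*}\in\mathcal{T}_{C}(g(\hat{x}))$ with $T(y^{*})=x^{*}$.

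It remains to check $y^{*}\neq 0$. By admissibility of $A$ at $g(\hat{x})$ we have $0\notin\overline{\mathrm{conv}}^{w^{*}}\{d_G\phi(g(\hat{x})):\phi\in C\}$, hence a fortiori $0\notin\mathcal{T}_{C}(g(\hat{x}))$, so any $y^{*}$ extracted above satisfies $y^{*}\neq 0$ and (b) reads $\lambda^{*}d_Gf(\hat{x})+\beta^{*}y^{*}\circ d_G g(\hat{x})=0$. The trivial alternative $\hat{x}\in\mathrm{int}(F)$ is absorbed by taking $(\lambda^{*},\beta^{*})=(1,0)$ and picking any $y^{*}\in\mathcal{T}_{C}(g(\hat{x}))\setminus\{0\}$, which is nonempty by Propositions \ref{lemmainf} and \ref{nonvide} whenever $g(\hat{x})\in A\setminus\mathrm{int}(A)$.
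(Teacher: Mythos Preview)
Your argument follows the same route as the paper: pull the constraints back through $g$ to form $\tilde{C}=\{\phi\circ g:\phi\in C\}$, verify that $F=[\tilde{C}]^{\times}$ is weak-admissible at $\hat{x}$ via Lemma~\ref{Gateaux}, apply Theorem~\ref{FarkasMM}, and then identify $\mathcal{T}_{\tilde{C}}(\hat{x})$ with $\{y^{*}\circ d_G g(\hat{x}):y^{*}\in\mathcal{T}_{C}(g(\hat{x}))\}$. The one place you go further than the paper is that you actually justify this last identification (the paper simply asserts it as a chain of equalities), and your nested-compactness argument for $\bigcap_n T(D_n)=T(\bigcap_n D_n)$ is correct; the residual caveat you flag in the final sentence, namely that $\mathcal{T}_{C}(g(\hat{x}))$ could be empty when $g(\hat{x})\in\mathrm{int}(A)$, is present in the paper's proof as well.
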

\begin{proof} Using the fact that  $A=[C]^{\times}$ is an admissible set at $g(\hat{x})$ together with Lemma \ref{Gateaux} we get that:

$(i)$ the set $\overline{\textnormal{conv}}^{w^*}\lbrace d_G\phi(g(\hat{x})): \phi\in C\rbrace$,  is a $w^*$-compact  subset of $Y^*$. 

$(ii)$  the family $\lbrace \phi\circ g: \phi \in C \rbrace$ is equi-Gateaux differentiable at $\hat{x}$  and $d_G(\phi\circ g)(\hat{x})=d_G\phi(g(\hat{x}))\circ d_G g(\hat{x})$ for all $\phi \in C$.

$(iii)$ the set $\lbrace \phi\circ g: \phi \in C \rbrace$ is equi-continuous at $\hat{x}$, since  $C$ is $r$-equi-Lipschitz  at $g(\hat{x})$ and $g$ is continuous at $\hat{x}$.

$(iv)$ $0\not \in \overline{\textnormal{conv}}^{w^*}\lbrace d_G \phi(g(\hat{x})): \phi\in C\rbrace$.

\noindent From $(iv)$, since $\mathcal{T}_{C}(g(\hat{x}))\subset \overline{\textnormal{conv}}^{w^*}\lbrace d_G \phi(g(\hat{x})): \phi\in C\rbrace$, we have that $0\not \in \mathcal{T}_{C}(g(\hat{x}))$. With the family $C_g:=\lbrace \phi \circ g: \phi \in C\rbrace$ and $F=[C_g]^\times$, we see that $g(x)\in A=[C]^{\times}$ if and only if $x\in F=[C_g]^\times$. From $(i)$, $(ii)$ and $(iii)$ we see easily that $F$ is weak-admissible at $\hat{x}$. Clearly, $\hat{x}$ is an optimal solution of the problem 
\begin{equation*}
(\mathcal{P})
\left \{
\begin{array}
[c]{l}
\max f\\
x\in \Omega \\
x \in F.
\end{array}
\right. 
\end{equation*}

Now, if $\hat{x}\in \textnormal{int}(F)$, then $d_G f(\hat{x})=0$ and so the theorem works with $\lambda^*=1$, $\beta^*=0$ and any $y^*\in\mathcal{T}_{C}(g(\hat{x}))$. If  $\hat{x}\not \in \textnormal{int}(F)$, by Theorem \ref{FarkasMM} applied to $F$, there exists $(\lambda^*,  \beta^*)\in  \R^+\times \R^+$  and  $x^*\in \mathcal{T}_{F,C_g}(\hat{x})\subset E^*$ such that

$(a)$ $(\lambda^*, \beta^*)\neq (0,0)$.

$(b)$ $\lambda^*  d_G f(\hat{x}) + \beta^* x^*=0$,

\noindent where, 
\begin{eqnarray*}
\mathcal{T}_{C_g}(\hat{x})&:=&\cap_{n\geq 1}\overline{\textnormal{conv}}^{w^*}\lbrace d_G(\phi\circ g)(\hat{x}): \phi(g(\hat{x})) \in [0,\frac{1}{n}], \phi \in C \rbrace\\
&=& \cap_{n\geq 1}\overline{\textnormal{conv}}^{w^*}\lbrace d_G\phi(g(\hat{x}))\circ d_G g(\hat{x}): \phi(g(\hat{x})) \in [0,\frac{1}{n}], \phi \in C \rbrace\\
&=& \lbrace y^* \circ d_G g(\hat{x}): y^*\in \mathcal{T}_{C}(g(\hat{x}))\subset Y^*\rbrace.
\end{eqnarray*}
 Hence, from $(b)$ we have $\lambda^*  d_G f(\hat{x}) + \beta^* y^* \circ d_G g(\hat{x})=0$ for some $y^*\in \mathcal{T}_{C}(g(\hat{x}))$, with $y^*\neq 0$ since  $0\not \in \mathcal{T}_{C}(g(\hat{x}))\neq \emptyset$ by Example \ref{nonzero}. This completes the proof. 

\end{proof}

Our results can also be applied when the constraints are in integral form. An example is given in the following corollary.  Let $K$ be a Hausdorff compact space and $X$ be a Banach space. The space $C(K, X)$ denotes the Banach space of $X$-valued bounded continuous functions on $K$ equipped with the sup-norm.  If $X=\R$, we simply note $C(K)$.

\begin{corollary} Let  $E$ be an l.c.t.v space  and $K$ be an Hausdorff topological space.  Let  $\Omega$ be an open subset of $E$, $S\subset \mathcal{M}^1(K)$ (the set of all Borel probability measures),  $g: \Omega \to C(K)$, $f: \Omega \to \R$ be two  mappings Gateaux differentiable at $\hat{x} \in \Omega$ and $g$ continuous at $\hat{x}$.  Assume that $\hat{x}$ is an optimal solution of the problem:
\begin{equation*}
(\mathcal{P})
\left \{
\begin{array}
[c]{l}
\max f\\
x \in \Omega \\
\int_K g(x) d\mu \geq 0: \forall \mu\in S
\end{array}
\right. 
\end{equation*}
  
Then,  there exists $(\lambda_0, \beta_0)\in  \R^+ \times \R^+ \setminus \lbrace (0,0)\rbrace$ and a Borel probability measure $\mu_0\in \overline{\textnormal{conv}}^{w^*}(S)$ on $K$ such that for all $x\in E$, $$\lambda_0  \langle d_G f(\hat{x}), x \rangle +  \beta_0 \int_{K} \langle d_G g(\hat{x}), x\rangle d\mu_0 =0$$
and $\int_K g(\hat{x}) d\mu_0=0$.
\end{corollary}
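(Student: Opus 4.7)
The plan is to recast the problem in the form of $(\mathcal{P}_g)$ of Proposition \ref{FarkasM} with $Y := \mathcal{C}(K)$ endowed with the sup-norm and the set
$$A := \left\lbrace y \in \mathcal{C}(K) : \int_K y\, d\mu \geq 0, \hspace{1mm} \forall \mu \in S\right\rbrace = [C]^{\times},$$
where $C := \lbrace \phi_\mu : \mu \in S\rbrace$ and $\phi_\mu(y) := \int_K y\, d\mu$. Then $g(x) \in A$ precisely when the integral constraints of $(\mathcal{P})$ hold, so $\hat{x}$ is an optimal solution of $(\mathcal{P}_g)$, and it remains to verify that $A$ is admissible at $g(\hat{x})$ in the sense of Definition \ref{def2}.

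I would check the three admissibility conditions as follows. Each $\phi_\mu$ is a continuous linear functional on $\mathcal{C}(K)$, hence Gateaux differentiable everywhere with $d_G \phi_\mu = \phi_\mu$; consequently the remainder in the equi-G-differentiability condition vanishes identically and the family is $1$-equi-Lipschitz since $\|\phi_\mu\|_* = \mu(K) = 1$. For the non-degeneracy condition (c) of Definition \ref{def2}, the key observation is that the $w^*$-continuous linear functional $\nu \mapsto \langle \nu, \boldsymbol{1}_K\rangle$ takes the value $1$ on every element of $S$ and hence on every element of $\overline{\textnormal{conv}}^{w^*}\lbrace d_G \phi(g(\hat{x})) : \phi \in C\rbrace = \overline{\textnormal{conv}}^{w^*}(S)$, so $0$ does not lie in this set.

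Having established admissibility, Proposition \ref{FarkasM} will yield $(\lambda_0, \beta_0) \in \R^+ \times \R^+ \setminus \lbrace (0,0)\rbrace$ and a nonzero $y^* \in \mathcal{T}_C(g(\hat{x}))$ satisfying $\lambda_0 d_G f(\hat{x}) + \beta_0 y^* \circ d_G g(\hat{x}) = 0$. Since $\mathcal{T}_C(g(\hat{x})) \subset \overline{\textnormal{conv}}^{w^*}(S) \subset \mathcal{M}^1(K)$, the last inclusion following from (\ref{eq:M1(D)}) because $\mathcal{M}^1(K)$ is $w^*$-closed convex, the functional $y^*$ identifies with a Borel probability measure $\mu_0 \in \overline{\textnormal{conv}}^{w^*}(S)$. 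Using the $w^*$-continuity of evaluation at $g(\hat{x}) \in \mathcal{C}(K)$ together with the definition of $\mathcal{T}_C(g(\hat{x}))$ gives $\mu_0(g(\hat{x})) \in [0, 1/n]$ for every $n \geq 1$, hence $\int_K g(\hat{x})\, d\mu_0 = 0$. For $x \in E$, the identity $(y^* \circ d_G g(\hat{x}))(x) = \int_K \langle d_G g(\hat{x}), x\rangle\, d\mu_0$ then produces the stated formula. The main difficulty will be the admissibility verification --- in particular the non-degeneracy condition --- which is exactly where the probability structure of $S$ is essential.
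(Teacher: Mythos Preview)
Your proposal is correct and follows essentially the same route as the paper's own proof, which simply applies Proposition~\ref{FarkasM} with $Y=\mathcal{C}(K)$ and $A=[S]^\times$, noting the inclusion $\mathcal{T}_{S}(g(\hat{x}))\subset\lbrace \mu\in\overline{\textnormal{conv}}^{w^*}(S):\int_K g(\hat{x})\,d\mu=0\rbrace$. You have merely supplied the details the paper leaves implicit: the verification of admissibility (in particular the use of $\langle\,\cdot\,,\boldsymbol{1}_K\rangle\equiv 1$ on $\overline{\textnormal{conv}}^{w^*}(S)$ to rule out $0$) and the passage from $\mathcal{T}_C(g(\hat{x}))$ to the complementary slackness condition via $w^*$-continuity of evaluation at $g(\hat{x})$.
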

\begin{proof} We apply Proposition \ref{FarkasM} with $Y=C(K)$ and the family $A=[S]^\times$ which is an admissible set, noticing that $\mathcal{T}_{S}(g(\hat{x}))\subset \lbrace \mu \in \overline{\textnormal{conv}}^{w^*}(S): \int_K g(\hat{x}) d\mu=0\rbrace$.
\end{proof}
\begin{Exemp} The above corollary applies with $S:=\lbrace \delta_s : s\in K\rbrace\subset \mathcal{M}^1(K)$,  where for every $s\in K$, we denote $\delta_s : C(K)\to \R$, the evaluation map at $s$, defined by $\delta_s(z)=z(s)$ for all $z\in C(K)$,  $g: \Omega\subset  C(K, X)\to C(K)$ of the form $g(z):=h(\cdot, z(\cdot)): s \mapsto h(s, z(s))$, where $h: K\times X \to \R$ is a continuous function and equi-Gateaux differentiable on the second variable, that is, the family $(h(s,\cdot))_{s\in K}$ is equi-Gateaux differentiable at every point $x\in X$. In this case, $g$ is Gateaux differentiable at every $\hat{z}\in  C(K, X)$ and we have for all $z\in C(K, X)$: $\langle d_G g(\hat{z}), z \rangle: s\mapsto \langle D_{G,2}h(s,\hat{z}(s)), z(s) \rangle$, where $D_{G,2}h$ denotes the Gateaux-differential of $h$ with respect to the second variable.
\end{Exemp}

\subsection{Optimization with inequality and equality constraints.}
We give below our second main result which generalize the result of J. Jahn in \cite[Theorem 5.3 ]{Ja}. Let $E$ and $W$  be Banach spaces, $\Omega$ be an open subset of $E$. Let $Y$ be a normed  space and $A\subset Y$.  Let  $g: \Omega \to Y$, $f: \Omega \to \R$ and $h: \Omega \to W$ be  mappings. Consider the following problem:
\begin{equation*}
(\widetilde{\mathcal{P}})
\left \{
\begin{array}
[c]{l}
\max f\\
x\in \Omega \\
g(x) \in A\\
h(x)=0
\end{array}
\right. 
\end{equation*}
 Using the implicit function theorem, we will reduce the problem $(\widetilde{\mathcal{P}})$ to the problem $(\mathcal{P})$ without equality constraints,  then we apply Proposition \ref{FarkasM}. 

\begin{theorem} \label{Farkas-bis}   Let $\hat{x} \in \Omega$ and suppose that $A=Y$ or $A=[C]^{\times}\subset Y$ is an admissible set at $g(\hat{x})$ determined by a family $C$ of functions on $Y$. Assume that

$(\alpha)$  $\hat{x}$ is a solution of the problem $(\widetilde{\mathcal{P}})$.

$(\beta)$ $f$ and $g$ are  Gateaux differentiable at $\hat{x}$ and Lipschitz in a neighborhood of  $\hat{x}$.

$(\gamma)$ $h$ is  Fr\'echet differentiable in a neighborhood of  $\hat{x}$, $d_F h(\cdot)$ $($the Fr\'echet-differential of $h$$)$ is continuous at $\hat{x}$ and $\textnormal{Im} d_F h(\hat{x})$ is closed.

$(\sigma)$ $\textnormal{Ker}(d_F h(\hat{x}))$ is a complemented subspace of $E$, that is there exists a closed subspace $E_1$ of $E$ such that $E=\textnormal{Ker}(d_F h(\hat{x}))\oplus E_1$.
\vskip5mm
Then, there exists $\lambda^*_0\in  \R^+$, $z^*_0\in Y^*$  and $w^*_0 \in W^*$ such that  $(\lambda^*_0, z^*_0,w^*_0)\neq (0,0,0)$ and 
$$\lambda^*_0   d_G f(\hat{x})+  z^*_0 \circ d_G g(\hat{x})+w^*_0 \circ d_F h(\hat{x})=0,$$
where, $(\lambda^*_0,z_0^*,w^*_0)$ can be chosen as follows:

$\bullet$  If $d_F h(\hat{x})$ is not onto: $(\lambda^*_0, z^*_0,w^*_0)=(0,0, w^*_0)$, with $w^*_0\neq 0$.

$\bullet$  If $d_F h(\hat{x})$ is onto and $A=Y$: $(\lambda^*_0, z^*_0,w^*_0)=(1,0, 0)$. 

$\bullet$ If $d_F h(\hat{x})$ is onto and $A=[C]^\times$: $(\lambda^*_0, z^*_0)\neq (0,0)$, with $z^*_0\in \R^+ \mathcal{T}_{C}(g(\hat{x}))$, where $0\not \in  \mathcal{T}_{C}(g(\hat{x}))$.
\end{theorem}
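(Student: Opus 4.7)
The plan is to reduce the problem $(\widetilde{\mathcal{P}})$ to one of the form treated in Proposition~\ref{FarkasM} by parametrising the level set $\{h=0\}$ near $\hat{x}$ via the implicit function theorem on the complement $E_1$, and then to lift the resulting multiplier relation from $\mathrm{Ker}(d_F h(\hat{x}))$ back to all of $E$ through the factor theorem. The non-surjective case splits off at once: when $\mathrm{Im}(d_F h(\hat{x}))$ is a proper closed subspace of $W$, the Hahn--Banach theorem produces $w^{*}_{0}\in W^{*}\setminus\{0\}$ annihilating that image, and the triple $(0,0,w^{*}_{0})$ satisfies the conclusion with $w^{*}_{0}\neq 0$.

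Assume now that $d_F h(\hat{x})$ is onto. Write $K:=\mathrm{Ker}(d_F h(\hat{x}))$ and decompose $E=K\oplus E_1$ as in hypothesis $(\sigma)$; the restriction $d_F h(\hat{x})|_{E_1}\colon E_1\to W$ is a continuous bijection, hence a topological isomorphism by the open mapping theorem. Writing $\hat{x}=\hat{u}+\hat{v}$ with $\hat{u}\in K$, $\hat{v}\in E_1$, the classical implicit function theorem (applicable since $h$ is $C^{1}$ near $\hat{x}$ by $(\gamma)$) provides an open neighbourhood $U_0$ of $\hat{u}$ in $K$ and a Fr\'echet differentiable map $\varphi\colon U_0\to E_1$ with $\varphi(\hat{u})=\hat{v}$ and $h(u+\varphi(u))=0$ for every $u\in U_0$. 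Differentiating this identity at $\hat{u}$ gives $d_F h(\hat{x})(v+d_F\varphi(\hat{u})v)=0$ for all $v\in K$; since $d_F\varphi(\hat{u})v\in E_1$ and $d_F h(\hat{x})$ is injective on $E_1$, we conclude $d_F\varphi(\hat{u})=0$.

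Define $\tilde{f}(u):=f(u+\varphi(u))$ and $\tilde{g}(u):=g(u+\varphi(u))$ on $U_0$. Since $u+\varphi(u)=\hat{x}+v+o(\|v\|)$ in norm (where $v=u-\hat{u}$) and both $f$ and $g$ are Gateaux differentiable at $\hat{x}$ and Lipschitz in a neighbourhood of it, the Lipschitz hypothesis absorbs the $o$-term and a direct check shows that $\tilde{f},\tilde{g}$ are Gateaux differentiable at $\hat{u}$ with $d_G\tilde{f}(\hat{u})=d_G f(\hat{x})|_K$ and $d_G\tilde{g}(\hat{u})=d_G g(\hat{x})|_K$. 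The point $\hat{u}$ is then optimal for the reduced problem $\max\tilde{f}$ subject to $u\in U_0$ and $\tilde{g}(u)\in A$. If $A=Y$ the constraint on $\tilde{g}$ is vacuous and $d_G\tilde{f}(\hat{u})=0$, i.e.\ $d_G f(\hat{x})|_K=0$; otherwise Proposition~\ref{FarkasM} delivers $\lambda^{*}_0\geq 0$ and $z^{*}_0\in\R^{+}\mathcal{T}_C(g(\hat{x}))$, not both zero, with $\lambda^{*}_0 d_G f(\hat{x})|_K+z^{*}_0\circ d_G g(\hat{x})|_K=0$. In either case the continuous linear functional $\lambda^{*}_0 d_G f(\hat{x})+z^{*}_0\circ d_G g(\hat{x})$ on $E$ (taking $\lambda^{*}_0:=1$, $z^{*}_0:=0$ in the $A=Y$ case) vanishes on $K$, so the factorisation through the topological isomorphism $E/K\cong W$ induced by $d_F h(\hat{x})$ furnishes $w^{*}_0\in W^{*}$ with $\lambda^{*}_0 d_G f(\hat{x})+z^{*}_0\circ d_G g(\hat{x})+w^{*}_0\circ d_F h(\hat{x})=0$.

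The main obstacle is the chain-rule computation in the third paragraph: composing a Gateaux differentiable function with a Fr\'echet differentiable one does not in general yield a Gateaux differentiable function, but the local Lipschitz property of $f$ and $g$, together with the crucial vanishing $d_F\varphi(\hat{u})=0$ extracted from the $K\oplus E_1$ decomposition, is precisely what identifies $d_G\tilde{f}(\hat{u})$ and $d_G\tilde{g}(\hat{u})$ cleanly with the restrictions of $d_G f(\hat{x})$ and $d_G g(\hat{x})$ to $K$, making the reduction to Proposition~\ref{FarkasM} and the subsequent lifting routine.
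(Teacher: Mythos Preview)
Your proof is correct and follows essentially the same route as the paper's: split off the non-surjective case via Hahn--Banach, then in the surjective case parametrise $\{h=0\}$ near $\hat{x}$ by the implicit function theorem on the complement $E_1$, reduce to Proposition~\ref{FarkasM} on $K=\mathrm{Ker}(d_F h(\hat{x}))$ using the Lipschitz hypothesis together with $d_F\varphi(\hat{u})=0$ to identify the restricted differentials, and finally extend the resulting functional from $K$ to $E$. The only cosmetic difference is that you invoke the factorisation $E/K\cong W$ to obtain $w^{*}_0$, whereas the paper writes it down explicitly as $w^{*}_0=(-\lambda^{*}_0 d_G f(\hat{x})|_{E_1}-z^{*}_0\circ d_G g(\hat{x})|_{E_1})\circ (d_F h(\hat{x})|_{E_1})^{-1}$; these are the same construction.
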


\begin{proof} We have two cases:

{\bf Case 1. $\textnormal{Im} d_F h(\hat{x})\neq W$.} Since $\textnormal{Im} d_F h(\hat{x})$ is closed, then by the Hahn-Banach theorem, there exists $w^*\in W^*\setminus \lbrace 0\rbrace$ such that $w^*\circ d_F h(\hat{x})=0$. Thus, 
the theorem works with $\lambda_0=0$,  $y^*_0=0$ and $w^*\neq 0$.

{\bf Case 2. $\textnormal{Im} d_F h(\hat{x})= W$.} In this case, $d_F h(\hat{x})_{|E_1}: E_1 \to W$ is an isomorphism.  Let $(\hat{a},\hat{b})\in \textnormal{Ker}(d_F h(\hat{x}))\times E_1$ such that $\hat{x}=\hat{a}+\hat{b}$. Since, $h(\hat{x})=0$, then  by the implicit function theorem, there exists a neighborhood $U$ of $\hat{a}$ in $\textnormal{Ker}(d_F h(\hat{x}))$, a neighborhood $V$  of $\hat{b}$ in $E_1$ such that $U+V\subset \Omega$ and a unique continuous function  $\varphi : U\to V$  such that

$(i)$ $\varphi(\hat{a})=\hat{b}$.

$(ii)$ $\forall x\in U$, $h(x+\varphi(x))=0$.

$(iii)$ $\varphi$ is  Fr\'echet  differentiable at $\hat{a}$, and $d_F \varphi(\hat{a})=0$.
\vskip5mm
Let us define $\widehat{f} : U \to \R$, $l: U\to Y$ by $\widehat{f}(x)=f(x+\varphi(x))$ and $l(x)=g(x+\varphi(x))$ for all $x\in U\subset \textnormal{Ker}(d_F h(\hat{x}))$. Notice that $l(\hat{a})=g(\hat{x})$ and $\widehat{f}(\hat{a})=f(\hat{x})$. By assumption and part $(ii)$ above, we have that $\hat{a}$ is a solution of 
\begin{equation*}
(\mathcal{P})
\left \{
\begin{array}
[c]{l}
\max \widehat{f}\\
x\in U \\
l(x) \in A.
\end{array}
\right. 
\end{equation*}
If $A=Y$, we see that $d_G \widehat{f}(\hat{a})=0$. If $A=[C]^{\times}$ is an admissible set at $g(\hat{x})=l(\hat{a})$, we get using Proposition \ref{FarkasM} with $\widehat{f}$ and $l$, that there exists $(\lambda^*_0, \beta^*_0)\in  \R^+\times \R^+ $ and $y^*_0\in \mathcal{T}_{C}(l(\hat{a}))$  such that  $(\lambda^*, \beta^*)\neq (0,0)$, $y^*_0\neq 0$ and
\begin{eqnarray}\label{formula10}
\lambda^*_0  d_G \widehat{f}(\hat{a})+\beta^*_0 y_0^*\circ d_G l(\hat{a})=0, 
\end{eqnarray}

Now, we observe from the expressions of  $\widehat{f}$ and $l$, using $(iii)$ and the fact that $f$ and $g$ are Lipschitz in a neighborhood of  $\hat{x}$, that  
$$d_G \widehat{f}(\hat{a})= d_G f(\hat{x})\circ (I_{\textnormal{Ker}(d_F h(\hat{x}))}+d_F \varphi(\hat{a}))=d_G f(\hat{x})\circ I_{\textnormal{Ker}(d_F h(\hat{x}))},$$ 
$$d_G l(\hat{a})= d_G g(\hat{x})\circ (I_{\textnormal{Ker}(d_F h(\hat{x}))}+d_F \varphi(\hat{a}))=d_G g(\hat{x})\circ I_{\textnormal{Ker}(d_F h(\hat{x}))}.$$ 
where $I_{\textnormal{Ker}(d_F h(\hat{x}))}$ denotes  the identity map from $ \textnormal{Ker}(d_F h(\hat{x}))$ into $E$. Thus, from $(\ref{formula10})$ we have 
\begin{eqnarray}\label{formula21}
\lambda^*_0 d_G f(\hat{x})\circ I_{\textnormal{Ker}(d_F h(\hat{x}))}+  \beta^*_0 y_0^* \circ d_G g(\hat{x})\circ I_{\textnormal{Ker}(d_F h(\hat{x}))}= 0
\end{eqnarray}
We set
\begin{eqnarray*}
w^*_0:=(-\lambda^*_0  d_G f(\hat{x})_{|E_1}- \beta^*_0 y_0^* \circ d_G g(\hat{x})_{|E_1})\circ (d_F h(\hat{x})_{|E_1})^{-1}\in W^*
\end{eqnarray*}
Then, we have
\begin{eqnarray}\label{formula22}
\lambda^*_0 d_G f(\hat{x})_{|E_1}+ \beta^*_0 y_0^* \circ d_G g(\hat{x})_{|E_1}  +w^*_0\circ d_F h(\hat{x})_{|E_1} =0.
\end{eqnarray}
Since $ d_F h(\hat{x})\circ I_{\textnormal{Ker}(d_F h(\hat{x}))}=0$, by using the formulas (\ref{formula21}) and (\ref{formula22}), we obtain
\begin{eqnarray*}
\lambda^*_0 d_G f(\hat{x})+ \beta^*_0 y_0^* \circ d_G g(\hat{x}) +w^*_0\circ d_F h(\hat{x})=0,
\end{eqnarray*}
where,  $(\lambda^*_0, \beta^*_0y^*_0)\neq (0,0)$ since $(\lambda^*_0, \beta^*_0)\neq (0,0)$ and $y^*_0\neq 0$ (Notice that $0\not \in \mathcal{T}_{C}(g(\hat{x}))$, since $[C]^\times$ is admissible at $g(\hat{x})$).
\end{proof}

 We give the following corollary (an immediate consequence of Theorem \ref{Farkas-bis}) which  is also an extention of the result in \cite[Theorem 5.3 ]{Ja}. The following result extends results known for closed convex cones under the Fréchet differentiability hypothesis to the more general case of closed convex sets whose recession cones have nonempty interiors and under the hypothesis that $f$ and $g$ are Gateaux differentiable at the optimal solution.

\begin{corollary} \label{cor-Farkas-bis} Under the hypothesis of Theorem \ref{Farkas-bis}, assume that $A$ is a closed convex set such that $\textnormal{int}(\mathcal{R}_A)\neq \emptyset$ $($in particular if $A$ is a closed convex cone with a nonempty interior$)$. Then, there exists $\lambda^*_0\in  \R^+$, $z^*_0\in (\mathcal{R}_A)^*$  and $w^*_0 \in W^*$ such that  

$(i)$ $(\lambda^*_0, z^*_0,w^*_0)\neq (0,0,0)$,

$(ii)$ $\lambda^*_0   d_G f(\hat{x})+  z^*_0 \circ d_G g(\hat{x})+w^*_0 \circ d_F h(\hat{x})=0,$

$(iii)$ $z^*_0(g(\hat{x}))=\inf_{y\in A} z^*_0(y)$ $($$=0$, if $A$ is a closed convex cone, in this case, $\mathcal{R}_A=A$$)$, that is, $g(\hat{x})$  minimises  $z^*_0$ on $A$. 

\noindent The multipliers $(\lambda^*_0,z_0^*,w^*_0)$ can be chosen as follows:

$\bullet$  If $d_F h(\hat{x})$ is not onto: $(\lambda^*_0, z^*_0,w^*_0)=(0,0, w^*_0)$, with $w^*_0\neq 0$.

$\bullet$  If $d_F h(\hat{x})$ is onto and $A=Y$: $(\lambda^*_0, z^*_0,w^*_0)=(1,0, 0)$. 

$\bullet$ If $d_F h(\hat{x})$ is onto and $A\neq Y$: $(\lambda^*_0, z^*_0)\neq (0,0)$.
\end{corollary}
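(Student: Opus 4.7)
The plan is to apply Theorem~\ref{Farkas-bis} directly, using Theorem~\ref{Mi1} to recognize $A$ as an admissible set and Example~\ref{nonzero} to extract the structural information on the multiplier $z^*_0$.

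First, I would invoke Theorem~\ref{Mi1}(i): since $A$ is closed convex with $\textnormal{int}(\mathcal{R}_A)\neq\emptyset$ (so in particular $A\neq Y$), the set $A$ is admissible at each of its points, and it is determined by the explicit family
\[
C=\lbrace y^* - \inf_{x\in A} y^*(x): y^*\in S_{Y^*}\cap (-\textnormal{bar}(A))\rbrace.
\]
In particular, $A$ is admissible at $g(\hat{x})$, which is exactly the hypothesis under which Theorem~\ref{Farkas-bis} applies in its non-trivial form.

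Next, I would apply Theorem~\ref{Farkas-bis} to obtain $\lambda^*_0\in\R^+$, $w^*_0\in W^*$, and $z^*_0\in \R^+\,\mathcal{T}_C(g(\hat{x}))$ with $(\lambda^*_0,z^*_0,w^*_0)\neq(0,0,0)$ and
\[
\lambda^*_0\,d_G f(\hat{x})+z^*_0\circ d_G g(\hat{x})+w^*_0\circ d_F h(\hat{x})=0.
\]
The three bullet cases at the end of Theorem~\ref{Farkas-bis} then transfer verbatim to the three bullets of the corollary: if $d_F h(\hat{x})$ is not onto we take $(0,0,w^*_0)$ with $w^*_0\neq 0$; if $d_F h(\hat{x})$ is onto and $A=Y$ we take $(1,0,0)$; if $d_F h(\hat{x})$ is onto and $A\neq Y$, then since $0\notin\mathcal{T}_C(g(\hat{x}))$, any scalar multiple of a point in $\mathcal{T}_C(g(\hat{x}))$ retains that non-triviality, giving $(\lambda^*_0,z^*_0)\neq(0,0)$.

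It then remains to verify $(iii)$ and to check $z^*_0\in(\mathcal{R}_A)^*$. Here I would cite Example~\ref{nonzero}(i), which (using the same $C$ as above) states
\[
\mathcal{T}_{C}(g(\hat{x}))\subset \lbrace y^*\in (\mathcal{R}_A)^*\setminus\lbrace 0\rbrace: y^*(g(\hat{x}))=\inf_{x\in A} y^*(x)\rbrace.
\]
Since $z^*_0$ is a non-negative multiple of an element of $\mathcal{T}_C(g(\hat{x}))$, the cone $(\mathcal{R}_A)^*$ is closed under non-negative scalar multiplication, and the identity $y^*(g(\hat{x}))=\inf_{x\in A}y^*(x)$ is homogeneous in $y^*$, so both properties pass to $z^*_0$. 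When $A$ is itself a closed convex cone we have $\mathcal{R}_A=A$ and $\inf_{x\in A}z^*_0(x)=0$, recovering the parenthetical complementarity statement.

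No genuine obstacle is expected here, since all the hard work is already encapsulated in Theorem~\ref{Mi1}, Theorem~\ref{Farkas-bis}, and Example~\ref{nonzero}; the only point requiring a little care is to match the non-triviality conditions and the scaling between $\mathcal{T}_C(g(\hat{x}))$ and its positive cone $\R^+\mathcal{T}_C(g(\hat{x}))$, which I would handle by the brief homogeneity observation above.
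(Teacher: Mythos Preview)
Your proposal is correct and follows essentially the same route as the paper: apply Theorem~\ref{Farkas-bis} after recognizing $A$ as admissible via Theorem~\ref{Mi1}/Example~\ref{nonzero}, then read off $z^*_0\in(\mathcal{R}_A)^*$ and condition~$(iii)$ from the inclusion $\mathcal{T}_C(g(\hat{x}))\subset\lbrace y^*\in(\mathcal{R}_A)^*\setminus\lbrace 0\rbrace: y^*(g(\hat{x}))=\inf_{x\in A}y^*(x)\rbrace$. One small slip: the parenthetical ``so in particular $A\neq Y$'' is false (if $A=Y$ then $\mathcal{R}_A=Y$ has nonempty interior), but this is harmless since you correctly treat the case $A=Y$ separately via the second bullet of Theorem~\ref{Farkas-bis}.
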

\begin{proof} We apply directly  Theorem \ref{Farkas-bis} using the following fact: in the case where $A\neq Y$ and $\textnormal{int}(\mathcal{R}_A)\neq \emptyset$, using Example \ref{nonzero}, we have that $A$ is admissible at each of its points and is determined by $C=\lbrace z^* - \inf_{x\in A} z^*(x): z^*\in S_{Y^*}\cap (-\textnormal{bar}(A))\rbrace$ and $\mathcal{T}_{C}(\hat{x}) \subset \lbrace z^*\in (\mathcal{R}_A)^*\setminus \lbrace 0\rbrace: z^*(\hat{x})=\inf_{x\in A} z^*(x)\rbrace$.
\end{proof}

\section*{Declaration} 

- The authors declare that there is no conflict of interest.

- Data sharing not applicable to this article as no datasets were generated or analysed during the current study.

\section*{Acknowledgement}
This research has been conducted within the FP2M federation (CNRS FR 2036) and  SAMM Laboratory of the University Paris Panthéon-Sorbonne.

\bibliographystyle{amsplain}

\begin{thebibliography}{999}

%
\bibitem{Bl} J. Blot  {\it On the multipliers rules}. Optimization. 65: 947-955 (2016).
%
\bibitem{Bl1} J. Blot {\it Rank Theorem in Infinite Dimension and Lagrange Multipliers}  Mathematics Applied Mathematics \& Optimization 81 (3), (2020),  815-822.
%
\bibitem{Bour} N. Bourbaki,  {\it \'Ements de Math\'ematique, Int\'egration, chapitres 1 \`a 4}, deuxi\`eme \'edition revue et augment\'ee, Hermann, Paris 1973. 
%
\bibitem{Cz} Z. Chen. Optimality Conditions of Semi-Inﬁnite Programming and Generalized Semi-Inﬁnite Programming. Ph.d. thesis, The Hong Kong Polytechnic Univer- sity, Hong Kong, 2013.
%
\bibitem{ChM} G. Choquet, P-A. Meyer,  {\it Existence et unicité des représentations intégrales dans les convexes compacts quelconques}, Annales de l'Institut Fourier, Tome 13 (1963) no. 1, 139-154. 
%
\bibitem{DGZ} R. Deville and G. Godefroy and V. Zizler, \textit{A smooth variational principle with applications to
Hamilton-Jacobi equations in infinite dimensions}, J. Funct. Anal. 111, (1993) 197-212.
%
\bibitem{Dmb} M. B. Donato, The infinite dimensional Lagrange multiplier rule for convex optimization problems, J. Funct. Anal. 261 (2011), no. 8, 2083-2093. 
%
\bibitem{KF1} K. Fan, {\it On the Krein–Milman theorem, Proceedings of symposia in pure mathematics}, vol. 7, Amer. Math. Soc., Providence, RI, (1963), 211-219. MR 0154097.
%
%
\bibitem{GA} A. Grothendieck, Topological Vector Spaces. Translated by Chaljub, Orlando. New York (1973): Gordon and Breach Science Publishers. ISBN 978-0-677-30020-7. OCLC 886098.
%
\bibitem{HK} R. Hettich and K. O. Kortanek, Semi-inﬁnite programming: theory, meth- ods, and applications, SIAM Rev., 35 (1993), 380-429. 
%
\bibitem{Ioffe1} A. D. Ioffe, {\it Approximate subdifferentials and applications 2 : Functions on locally convex spaces}, Mathematika, 33 (1986), 111-128.
%
\bibitem{Ioffe2}  A. D. Ioffe, {\it Approximate subdifferentials and applications 3 : Metric theory}, Mathematika, 36 (1989), 1-38.
%
\bibitem{Ja} J. Jahn, Introduction to the Theory of Nonlinear Optimization, Springer, Berlin, (2007).
%
\bibitem{Jourani} A. Jourani {\it The role of locally compact cones in nonsmooth analysis} Comm. Appl. Nonlinear Anal, 5 (1998), 1-35.
%
\bibitem{JS} A. Jourani, F.J.  Silva, {\it Existence of Lagrange multipliers under Gâteaux differentiable data with applications to stochastic optimal control problems}. SIAM J. Optim. (2020) 30(1), 319-348. 
%
\bibitem{JT1} A. Jourani and L. Thibault, {\it Approximations and metric regularity in mathematical programming in Banach spaces}, Math. Oper. Res., 18 (1993), 390-401.
%
\bibitem{JT2} A. Jourani and L. Thibault, {\it Verifiable conditions for openness and metric regularity, in Banach spaces}, Trans. Amer. Math. Soc., 347 (1995), 1255-1268.
%
\bibitem{Ku} S. Kurcyusz, {\it On the existence and nonexistence of Lagrange multipliers in Banach spaces}, J. Optim. Theory Appl., 20 (1976), 81-110.
%
\bibitem{LS} M. L$\acute{\textnormal{o}}$pez and G. Still, Semi-inﬁnite programming, European J. Oper. Res., 180 (2007), 491-518.
%
%
\bibitem{Mp} P. Michel  {\it Cours de math\'ematiques pour \'economistes} [Course of mathematics for economists]. 2nd ed. Paris: Economica; (1989).
%
\bibitem {Ru} W. Rudin, {\it Functional analysis} (2nd ed.) International Series in Pure and Applied Mathematics, McGraw-Hill, (New York, 1991).
%
\bibitem {RG} J.-J. R$\ddot{\textnormal{u}}$ckmann and J. A. G$\acute{\textnormal{o}}$mez, On generalized semi-inﬁnite programming, Top, 14 (2006), 1-59.
%
\bibitem {Sh} A. Shapiro, First and second order optimality conditions and perturbation analysis of semi-inﬁnite programming problems, in Semi-inﬁnite programming, vol. 25 of Non- convex Optim. Appl., Kluwer Acad. Publ., Boston, MA, 1998, 103-133.
%
\bibitem{Yi} H. Yilmaz, {\it A generalization of multiplier rules for infinite-dimensional optimization problems},  Optimization (2020). https://doi.org/10.1080/02331934.2020.1755863
%
\bibitem{Za} C. Z$\check{\textnormal{a}}$linescu, Convex Analysis in General Vector Spaces, World Scientiﬁc, River Edge, NJ, 2002. MR 2003k:49003


\end{thebibliography}

\end{document}